\theoremstyle{definition}
\newtheorem{definition}{Definition}
\newtheorem{example}{Example}
\theoremstyle{plain}
\newtheorem{theorem}{Theorem}
\newtheorem{proposition}{Proposition}
\newcommand{\ZZ}{\mathbb{Z}}
\newcommand{\NN}{\mathbb{N}}
\title{New Semifield Planes of order 81\footnote{This is a revised paper (original title: \emph{New semifield planes of orders 64 and 81)}. During the referee process the authors were informed that U. Dempwolff had previously and independently obtained a classification of finite semifields of order 81 (\emph{Semifield Planes of Order 81},
    {J. of Geometry} (to appear)). So, the current version (unpublished) contains only the results concerning 81 element finite semifields. For the results on order 64 the reader is referred to \emph{Classification on 64-element finite semifields} (I.F. R\'ua, E.F. Combaro and J. Ranilla) 	arXiv:0807.2994v1}}
\author{ I.F. R\'
ua\thanks{Departamento de Matem\'aticas, Universidad de Oviedo, rua@uniovi.es .
Partially supported by FICYT (IB05-186) and MEC - MTM - 2007 - 67884 Ð C04 - 01}\and El\'\i as F. Combarro\thanks{Departmento de Inform\'atica,
Universidad de Oviedo, elias@aic.uniovi.es . Partially supported by MEC -TIN - 2007 - 61273}}
\date{}
\begin{document}

\maketitle

\begin{abstract}
A finite semifield $D$ is a finite nonassociative ring with identity such that the set $D^*=D\setminus\{0\}$ is closed under the product. From any finite semifield a projective plane can be constructed. In this paper we obtain new semifield planes of order 81 by means of computational methods. These computer-assisted results yield to a complete classification (up to isotopy) of 81-element finite semifields.
   
\end{abstract}

\section{Introduction}

A \textbf{finite semifield} (or finite division ring) $D$ is a
finite nonassociative ring with identity such that the set
$D^*=D\setminus\{0\}$ is closed under the product, i.e., it is a loop \cite{Knu65,Cor99}.
Finite semifields have been traditionally considered in the context of finite
geometries since they coordinatize projective semifield planes \cite{Hall}. Recent
applications to coding theory \cite{Calderbank3,codigosdesemicuerpos,Symplectic}, combinatorics and graph theory \cite{ISSAC}, have
broaden the potential interest in these rings.

Because of their diversity, the obtention of general theoretical
algebraic results seems to be a rather difficult (and challenging)
task. On the other hand, because of their finiteness, computational
methods can be naturally considered in the study of these objects. 
So, the classification of finite semifields of a given order is a rather natural problem to use computations. For instance, computers were used in the classification up to isotopy of finite semifields or order 32 \cite{Walker,Knu65}. This computer-assisted classification is equivalent to the classification of the corresponding projective semifield planes up to isomorphism \cite{Alb60}.

In this paper we obtain a classification up to isotopy of finite semifields with \textbf{81 elements}. It turns out that approximately one half of the 81-element semifield planes were previously unknown.

The structure of the paper is as follows. In $\S 2$, basic
properties of finite semifields are reviewed. Finally, in $\S 3$, a complete description of 81-element finite semifields is given.

\section{Preliminaries}

In this section we collect definitions and facts on finite
semifields. Proofs of these results can be found, for instance, in
\cite{Knu65,Cor99}.

\begin{definition}
    A finite nonassociative ring $D$ is called \textbf{presemifield},
    if the set of nonzero elements $D^*$ is closed under the
    product.
    If $D$ has an identity element, then it is called \textbf{finite semifield}.
\end{definition}

If $D$ is a finite semifield, then $D^*$ is a multiplicative loop.
That is, there exists an element $e\in D^*$ (the identity of $D$)
such that $ex=xe=x$, for all $x\in D$ and, for all $a,b\in D^*$,
  the equation $ax=b$
(resp. $xa=b$) has a unique solution.

Besides finite fields (which are obviously finite semifields), finite semifields were first considered by L.E. Dickson
\cite{Dickson1} and were deeply studied by A.A. Albert
\cite{Alb52,Alb58,Alb60,Alb61}. The term ``finite semifield'' was
introduced in 1965 by D.E. Knuth \cite{Knu65}. These rings play an
important role in the study of certain projective planes, called \emph{semifield planes}
\cite{Hall,Knu65}. Recently, applications of finite semifields to
coding theory have been also considered
\cite{Calderbank3,codigosdesemicuerpos,Symplectic}. Also, connections to combinatorics and graph theory have been found \cite{ISSAC}.

\begin{proposition}
    The characteristic of a finite presemifield $D$ is a prime number $p$, and $D$ is a finite-dimensional algebra over $GF(q)$ ($q=p^c$) of dimension $d$, for some $c,d\in \NN$, so that 
    $|D|=q^d$.
    If $D$ is a finite semifield, then $GF(q)$ can be chosen to be its associative-commutative center $Z(D)$.
\end{proposition}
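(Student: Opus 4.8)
The plan is to reduce everything to the single structural fact already packaged in the definition of a presemifield, namely that $D$ has \emph{no zero divisors}: if $u,v\in D^*$ then $uv\in D^*$, so $uv=0$ forces $u=0$ or $v=0$. From here I would argue as follows.

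\textbf{Characteristic.} Let $n$ be the characteristic of $D$, i.e. the exponent of the finite abelian group $(D,+)$; equivalently $n$ is the largest additive order of an element, so choose $x\in D$ with $\mathrm{ord}(x)=n$. If $n=ab$ with $1<a,b<n$, then (using $\gcd(n,a)=a$ and $\gcd(n,b)=b$) the elements $ax$ and $bx$ have additive orders $b>1$ and $a>1$, hence are nonzero, whereas $(ax)(bx)=ab\,(xx)=n(xx)=0$ by $\ZZ$-bilinearity of the product together with $nD=0$. This contradicts the absence of zero divisors, so $n=p$ is prime. Consequently $(D,+)$ is an elementary abelian $p$-group, $|D|=p^m$ for some $m\in\NN$, and $D$ is automatically a finite-dimensional algebra over its prime field $GF(p)$; this already yields the first assertion with $q=p$, $c=1$, $d=m$.

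\textbf{The center of a semifield.} Now assume $D$ has an identity $e$. Using the left, middle and right nuclei one forms the nucleus $N$ and the center $Z(D)=\{a\in N: ax=xa\ \forall x\in D\}$; it is routine that $Z(D)$ is an additive subgroup, closed under multiplication, and contains the copy of $GF(p)$ generated by $e$. By construction $Z(D)$ is associative and commutative, and it inherits from $D$ the absence of zero divisors; being a finite commutative domain with identity, it is a field, say $GF(q)$ with $q=p^c$ (no appeal to Wedderburn is needed). Since every $\alpha\in Z(D)$ associates in all positions and commutes with all of $D$, the external law $GF(q)\times D\to D$, $(\alpha,x)\mapsto\alpha x$, makes $D$ a $GF(q)$-vector space on which the ring multiplication is $GF(q)$-bilinear; hence $D$ is a finite-dimensional $GF(q)$-algebra of some dimension $d$, and $|D|=q^d$.

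\textbf{Main obstacle.} Almost all of this is bookkeeping. The one point needing a small idea is the choice of factorization in the characteristic argument: one must pick an element whose additive order realizes the exponent and split $n=ab$ so that \emph{both} multiples $ax$, $bx$ survive while their product is annihilated by $n$. Checking that $Z(D)$ is genuinely closed under the operations (so that ``finite commutative domain $\Rightarrow$ field'' applies) and that the nucleus condition is precisely what makes scalar multiplication associative and the product $GF(q)$-bilinear are the other places to be careful, but neither is deep.
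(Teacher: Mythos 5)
Your proof is correct, and it is essentially the standard argument: the paper itself gives no proof of this proposition but defers to Knuth and Cordero--Wene, where the same route is taken (no zero divisors forces prime characteristic and a $GF(p)$-vector space structure; for a semifield the center is a finite commutative domain, hence a field $GF(q)$, and the nucleus conditions make the product $GF(q)$-bilinear). No gaps worth flagging.
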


\begin{definition}
    A finite semifield $D$ is called \textbf{proper}, if it is not
    associative.
\end{definition}

Because of Wedderburn's theorem \cite{Wed}, finite semifields are either proper or finite fields. The existence of proper semifields is guaranteed by the following result.

\begin{theorem}
    If $D$ is a proper semifield of dimension $d$ over $Z(D)$, then
    $d\ge 3$ and $|D|\ge 16$.
    Moreover, for any prime power $p^n\ge 16$, with $n\ge 3$, there exists a proper semifield $D$ of
    cardinality $p^n$.
\end{theorem}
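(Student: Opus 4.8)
The bound $d\ge 3$ is proved by ruling out $d\le 2$. If $d=1$ then $D=Z(D)$ is a field, hence not proper. Suppose $d=2$, set $F=Z(D)=GF(q)$, and pick $x\in D\setminus F$, so that $\{1,x\}$ is an $F$-basis and $x\cdot x=a+bx$ for unique $a,b\in F$. Because $F$ is central, left multiplication $L_x\colon z\mapsto x\cdot z$ is $F$-linear and $L_{x-\lambda}=L_x-\lambda\,\mathrm{id}$ for every $\lambda\in F$; since $x-\lambda\ne 0$ and $D^{*}$ is a loop, $L_{x-\lambda}$ is invertible, so no $\lambda\in F$ is an eigenvalue of $L_x$ and the characteristic polynomial $t^2-bt-a$ of $L_x$ is irreducible over $F$. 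On the other hand the commutator $(u,v)\mapsto uv-vu$ and the associator $(u,v,w)\mapsto(uv)w-u(vw)$ are $F$-multilinear (again because $F$ is central), so they vanish identically as soon as they vanish on $\{1,x\}$; every value is immediate except $(x,x,x)$, and substituting $x\cdot x=a+bx$ gives $(x\cdot x)\cdot x=x\cdot(x\cdot x)$. Hence $D$ is a commutative associative $F$-algebra generated by $x$, so $D\cong F[t]/(t^2-bt-a)\cong GF(q^2)$ is a field, contradicting properness. Therefore $d\ge 3$.

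Since $d\ge 3$ and $q=|Z(D)|\ge 2$, we get $|D|=q^{d}\ge 8$, with equality only if $q=2$ and $d=3$; it remains to exclude $|D|=8$. This case has no short structural argument, and I would handle it either by a finite computation --- fix a $GF(2)$-basis $\{1,x,y\}$, impose on the structure constants that every $L_z$ and $R_z$ with $z\ne 0$ be bijective, and verify that the multiplication table is forced to be that of $GF(8)$ --- or by quoting the classification of semifields of order $p^{3}$: each is a field or a generalized twisted field, and for $p=2$ the twisted construction degenerates to $GF(8)$. Either way there is no proper semifield of order $8$, so $|D|\ge 16$.

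For the converse I would produce, for each prime power $p^{n}$ with $n\ge 3$ and $p^{n}\ge 16$, an explicit proper semifield of that order, from a short list of classical coordinate constructions, checking each time that the defining product has no zero divisors, has a two-sided identity (normalizing a presemifield to an isotopic unital semifield when needed), and is non-associative. The main tool is Albert's generalized twisted field on $K=GF(p^{n})$, $x\circ y=xy-c\,x^{\sigma}y^{\tau}$ with $\sigma,\tau$ suitable nontrivial automorphisms of $K/GF(p)$ and $c\in K^{*}$ taken outside the multiplicative subgroup $\{\,x^{\sigma-1}y^{\tau-1}:x,y\in K^{*}\}$; such a $c$ exists exactly when that subgroup is proper, and for $\sigma(x)=x^{p^{i}}$ the image $\{x^{p^{i}-1}\}$ has index $p^{\gcd(n,i)}-1$ in $K^{*}$, which exceeds $1$ whenever $p$ is odd (take $i=1$) or $n$ is composite (take $i$ a proper divisor of $n$ with $i\ge 2$). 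For $n\ge 3$ one can moreover choose $\sigma,\tau$ so that $(K,\circ)$ is not a field (Albert). The only admissible orders this misses are $2^{n}$ with $n$ prime, hence $n\ge 5$ (namely $32,128,\dots$), and these are supplied by other classical families, e.g.\ Knuth's binary semifields of order $2^{n}$ for odd $n\ge 5$; for odd prime powers one may alternatively quote Dickson's commutative semifields of order $p^{2m}$, $m\ge 2$.

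The dimension estimate is routine; the real difficulties lie elsewhere. Excluding order $8$ admits no slick proof and rests on a computation or on the nontrivial order-$p^{3}$ classification, and --- more seriously --- no single known family of semifields realizes all admissible orders, so the existence statement must be assembled from several constructions, the genuine work being the verification of the no-zero-divisor and non-associativity conditions for each family.
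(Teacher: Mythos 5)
The paper itself offers no proof of this theorem: it is quoted as background, with proofs referred to \cite{Knu65,Cor99}, so your attempt has to stand on its own. Your argument for $d\ge 3$ is correct and complete (the associator/commutator multilinearity trick plus the eigenvalue argument for $L_x$ does force $D\cong GF(q^2)$ when $d=2$), and flagging the exclusion of order $8$ as a finite check or an appeal to the order-$p^3$ classification (Menichetti) is a legitimate, if outsourced, way to finish that half.

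The genuine gap is in the existence half, and it sits exactly at the threshold order $16$. In Albert's presemifield $x\circ y=xy-c\,x^{\sigma}y^{\tau}$ the case $\sigma=\tau$ is degenerate: then $x\circ y=f(xy)$ with $f(t)=t-ct^{\sigma}$ additive and bijective, so $(K,\circ)$ is isotopic to $GF(p^n)$ via the triple $(\mathrm{id},\mathrm{id},f)$ and the semifield it determines is the field itself, not a proper semifield. Hence you must take $\sigma\neq\tau$, both nontrivial, and then a suitable $c$ exists only if the subgroup $\{x^{\sigma-1}y^{\tau-1}\}$, whose index in $K^*$ is $\gcd\bigl(p^{\gcd(i,n)}-1,\,p^{\gcd(j,n)}-1\bigr)=p^{\gcd(i,j,n)}-1$, is proper. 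For $p=2$, $n=4$ the only exponent with $\gcd(i,4)\ge 2$ is $i=2$, so any choice with $\sigma\neq\tau$ makes that subgroup all of $K^*$: no generalized twisted field of order $16$ exists, and your recipe ``take $i$ a proper divisor of $n$ with $i\ge 2$'' silently lands in the degenerate $\sigma=\tau$ case. Your backup families do not rescue this order either: Knuth's binary semifields require odd exponent and Dickson's commutative semifields require odd characteristic. So the claim that the only orders missed are $2^n$ with $n$ prime is false; $2^4=16$ is missed as well and must be supplied separately (e.g.\ by one of the two explicit proper semifields of order $16$ in Kleinfeld's or Knuth's lists). The same $\sigma=\tau$ pitfall makes your ``take $i=1$'' remark for odd $p$ ambiguous, though there the fix is immediate (take, say, $\sigma=x^{p}$, $\tau=x^{p^{2}}$ for $n\ge 3$); for $p=2$ and composite $n\neq 4$ one can likewise choose distinct admissible exponents, but the $n=4$ case is a real hole in the argument as written.
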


The definition of isomorphism of  presemifields is the usual one for algebras, and classification of finite semifields up to isomorphism can be naturally considered. Because of the conections to finite geometries, we must also consider the following notion.

\begin{definition}
	If $D_1,D_2$ are two presemifields, then an \textbf{isotopy} between $D_1$ and $D_2$ is a triple $(F,G,H)$ of bijective linear maps $D_1\to D_2$ such that
	$$H(ab)=F(a)G(b)\; \forall a,b\in D_1.$$
\end{definition}

It is clear that any isomorphism between two presemifields is an isotopy, but the converse is not necessarily true. It can be shown that any presemifield is isotopic to a finite semifield \cite[Theorem 4.5.4]{Knu65}.
From any presemifield $D$ a projective plane $\mathcal P(D)$ can be constructed (see \cite{Hall,Knu65} for the details of this construction). Theorem 6 in \cite{Alb60} shows that isotopy of finite semifields is the algebraic translation of the isomorphism between the corresponding projective planes. So, two finite semifields $D_1,D_2$ are isotopic if, and only if, the projective planes $\mathcal P(D_1),\mathcal P(D_2)$ are isomorphic. The set of isotopies between a finite semifield and itself is a group under composition, called the \textbf{autotopy} group.

Given a finite semifield $D$, it is possible to construct the set $\mathcal D$ of all its isotopic but non necessarily isomorphic finite semifields. It can be obtained from the \textbf{principal isotopes} of $D$ \cite{Knu65}. A principal isotope of $D$ is a finite semifield $D_{(y,z)}$ (where $y,z\in D^*$) such that $(D_{(y,z)},+)=(D,+)$ and multiplication is given by the rule
$$a\cdot b=R_z^{-1}(a)L_y^{-1}(b)\; \forall a,b\in D$$
where $R_z,L_y:D\to D$ are the maps $R_z(a)=az,L_y(a)=ya$, for all $a\in D$.
Moreover, there is a relation between the order of $\hbox{At}(D)$, the autotopy group of $D$, and the orders of the automorphism groups of the elements in $\mathcal D$ \cite[Theorem 3.3.4]{Knu65}.

\begin{theorem}
	If $D$ is a finite semifield, and $\mathcal D$ is the set of all nonisomorphic semifields isotopic to $D$, then
	$$(|D|-1)^2=|\emph{At}(D)|\sum_{E\in \mathcal D}\frac{1}{|\emph{Aut}(E)|}$$
	The sum of the right term will be called \emph{the Semifield/Automorphism (S/A) sum}.
\end{theorem}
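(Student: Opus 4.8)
The plan is to count, in two different ways, the set of all isotopies from $D$ to semifields in a fixed collection of representatives. First I would make the counting problem concrete: let $\mathcal{D} = \{E_1,\dots,E_k\}$ be a set of representatives of the isomorphism classes of semifields isotopic to $D$, and consider the set $\mathcal{I}$ of all triples $(F,G,H)$ that are isotopies from $D$ to some $E_i$. On one hand, I would show that $|\mathcal{I}|$ is controlled by the principal isotopes: by \cite[Theorem 4.5.4]{Knu65} every semifield isotopic to $D$ is isomorphic to a principal isotope $D_{(y,z)}$ with $y,z\in D^*$, and the principal isotopes are parametrized (with multiplicity) by the pairs $(y,z)$, of which there are $(|D|-1)^2$. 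The key bookkeeping step here is to check that distinct pairs $(y,z)$ can give isomorphic — even equal — principal isotopes, and that the ``overcounting'' is exactly governed by the autotopy group: an autotopy $(F,G,H)$ of $D$ sends the principal isotope at $(y,z)$ to the principal isotope at some $(y',z')$, giving a free action of $\mathrm{At}(D)$ on the set of $(|D|-1)^2$ principal-isotope data, so that $(|D|-1)^2 = |\mathrm{At}(D)|\cdot N$ where $N$ is the number of orbits.

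The second half of the count is to identify $N$ with $\sum_{E\in\mathcal{D}} 1/|\mathrm{Aut}(E)|$ — equivalently, to show $|\mathrm{At}(D)|\cdot\sum_E 1/|\mathrm{Aut}(E)| = (|D|-1)^2$ directly. For this I would set up a group action: $\mathrm{At}(D)$ acts on the left and $\mathrm{Aut}(E_i)$-type groups act on the right of the set of isotopies $D\to E_i$. Concretely, fix one isotopy $\Phi_i : D\to E_i$ for each $i$ (these exist since all the $E_i$ are isotopic to $D$); then every isotopy $D\to E_i$ is uniquely $\alpha\circ\Phi_i\circ\beta$ — no, more carefully: the set $\mathrm{Isot}(D,E_i)$ is a torsor-like object on which $\mathrm{At}(E_i)$ acts simply transitively by post-composition, so $|\mathrm{Isot}(D,E_i)| = |\mathrm{At}(E_i)| = |\mathrm{At}(D)|$ (autotopy groups of isotopic semifields have the same order, which is itself worth noting). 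Then I would relate $|\mathrm{At}(E_i)|$ to $|\mathrm{Aut}(E_i)|$ via the stabilizer of the identity: the isotopies fixing the multiplicative structure ``based at $e$'' form a copy of $\mathrm{Aut}(E_i)$, and the full autotopy group fibers over the set of principal isotopes of $E_i$, whose size is again related to $(|E_i|-1)^2 = (|D|-1)^2$. Assembling $|\mathrm{At}(D)| = |\mathrm{At}(E_i)|$ with the orbit-counting from the first paragraph yields the identity.

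The cleanest route is probably to avoid choosing representatives of the wrong thing: work with the action of $\mathrm{At}(D)$ on the $(|D|-1)^2$-element set $P = D^*\times D^*$ of principal-isotope parameters, verify the action is free (this is where properness/the loop axioms enter: an autotopy fixing a parameter pair must be trivial), and then show the orbit through $(y,z)$ has size $|\mathrm{At}(D)|$ while the isomorphism class of $D_{(y,z)}$ within $\mathcal{D}$ is constant on larger sets — precisely, the number of parameters $(y,z)$ giving a semifield isomorphic to a fixed $E\in\mathcal{D}$ equals $(|D|-1)^2/|\mathrm{Aut}(E)|$, because the automorphism group of $E$ acts on that fiber with the right orbit sizes. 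Summing the fiber sizes over $E\in\mathcal{D}$ recovers all of $P$, giving $\sum_E (|D|-1)^2/|\mathrm{Aut}(E)| = (|D|-1)^2\cdot(\text{something})$; dividing by $|\mathrm{At}(D)|$ after the free-action reduction produces the stated formula. I expect the main obstacle to be the precise verification that the relevant group actions are free (or that the overcounting multiplicities are exactly $|\mathrm{At}(D)|$ and $|\mathrm{Aut}(E)|$ respectively, with nothing extra) — this requires careful use of the loop axioms and the linearity of isotopy maps, and is exactly the content attributed to \cite[Theorem 3.3.4]{Knu65}, so I would follow Knuth's normalization of isotopes to the principal form to keep the bookkeeping honest.
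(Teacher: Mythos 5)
The paper itself offers no proof of this statement: it is quoted directly from Knuth (Theorem 3.3.4 of \cite{Knu65}), so your attempt can only be judged on its own merits. Your skeleton is the right one — parametrize the semifields isotopic to $D$ by the principal isotopes $D_{(y,z)}$ with $(y,z)\in D^*\times D^*$, let $\hbox{At}(D)$ act on this $(|D|-1)^2$-element parameter set, and compare orbit data with automorphism groups; the observation $|\hbox{At}(E)|=|\hbox{At}(D)|$ for isotopic $E$ is also correct and useful. But the two quantitative claims on which your count rests are both false, and they are exactly where the content of the theorem lies. First, the action of $\hbox{At}(D)$ on $D^*\times D^*$ is \emph{not} free: the stabilizer of a pair $(y,z)$ is isomorphic to $\hbox{Aut}(D_{(y,z)})$. (For $(e,e)$ this is immediate: if $(F,G,H)$ is an autotopy with $F(e)=G(e)=e$, then putting $b=e$ and $a=e$ in $H(ab)=F(a)G(b)$ forces $F=G=H$, so $H$ is an automorphism; automorphisms need not be trivial — $GF(81)$ has four.) Freeness is also numerically impossible: for the Desarguesian plane of order $81$ one has $|\hbox{At}(D)|=25600>6400=(|D|-1)^2$, so no orbit can have size $|\hbox{At}(D)|$, and the non-integral S/A sums in Table 1 show $(|D|-1)^2/|\hbox{At}(D)|$ is not an orbit count of a free action.

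Second, your ``cleanest route'' asserts that the number of pairs $(y,z)$ with $D_{(y,z)}\cong E$ is $(|D|-1)^2/|\hbox{Aut}(E)|$; summing that over $E\in\mathcal D$ would give $\sum_E 1/|\hbox{Aut}(E)|=1$, not the stated identity — the correct fiber size is $|\hbox{At}(D)|/|\hbox{Aut}(E)|$. The missing lemma (the heart of Knuth's argument) is the bijection between isotopies $(F,G,H)\colon E\to D$ and pairs consisting of a parameter $(y,z)$ and an isomorphism $E\to D_{(y,z)}$: given such an isotopy, set $y=F(e_E)$, $z=G(e_E)$; then $H=R_zF=L_yG$, hence $H(ab)=F(a)G(b)=R_z^{-1}(H(a))\,L_y^{-1}(H(b))$, i.e.\ $H\colon E\to D_{(y,z)}$ is an isomorphism, and the construction is reversible. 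Since the isotopies $E\to D$ form a set of size $|\hbox{At}(D)|$ and each nonempty fiber of the map to parameters has exactly $|\hbox{Aut}(E)|$ elements, the number of $(y,z)$ with $D_{(y,z)}\cong E$ is $|\hbox{At}(D)|/|\hbox{Aut}(E)|$; summing over $E\in\mathcal D$ exhausts the $(|D|-1)^2$ parameters and yields the formula. Equivalently, in your group-action picture the orbits coincide with the isomorphism-class fibers and orbit-stabilizer gives orbit size $|\hbox{At}(D)|/|\hbox{Aut}(E)|$; either way, the $1/|\hbox{Aut}(E)|$ factors come precisely from the nontrivial stabilizers you assumed away.
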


If $\mathcal B=[x_1,\dots,x_d]$ is a $GF(q)$-basis of a presemifield $D$, then there exists a unique set of constants $\mathbf A_{D,\mathcal B}=\{A_{i_1i_2i_3}\}_{i_1,i_2,i_3=1}^d\subseteq GF(q)$ such that 
$$x_{i_1}x_{i_2}=\sum_{i_3=1}^d{A_{i_1i_2i_3}}x_{i_3}\; \forall i_1,i_2\in\{1,\dots,d\}$$
It is called \textbf{3-cube} corresponding to $D$ with respect to the basis $\mathcal B$ \cite{Knu65}. This set is also known as multiplication table, and it completely determines the multiplication in $D$.
If $\mathcal B'$ is a second basis of $D$ (where $\mathcal B'=\mathcal BP^t$, with $P\in \hbox{GL}(d,q)$), then the relation between the 3-cubes $\mathbf A_{D,\mathcal B},\mathbf A_{D,\mathcal B'}$ is
$$\mathbf A_{D,\mathcal B'}=[P,P,P^{-t}]\times\mathbf A_{D,\mathcal B}$$
where $-t$ denotes inverse transpose, and
$$([F^1,F^2,F^3]\times \mathbf A_{D,\mathcal B})_{i_1i_2i_3}=\sum_{x_1,x_2,x_3=1}^d 
F^1_{i_1x_1}F^2_{i_2x_2}{F^3}_{i_3x_3}A_{x_1x_2x_3}$$
for all $F{^1},F{^2},F{^3}\in \hbox{GL}(d,q)$ \cite[(4.11)]{Knu65}.

A remarkable fact is that permutation of the indexes of a 3-cube preserves the absence of nonzero divisors. Namely, if $D$ is a presemifield, and $\sigma\in S_3$ (the symmetric group on the set $\{1,2,3\}$), then the set
$$\mathbf A_{D,\mathcal B}^\sigma=\{A_{i_{\sigma(1)}i_{\sigma(2)}i_{\sigma(3)}}\}_{i_1,i_2,i_3=1}^d\subseteq GF(q)$$ is the 3-cube of a $GF(q)$-algebra $D_{\mathcal B}^\sigma$ which has not zero divisors \cite[Theorem 4.3.1]{Knu65}. 
Notice that, in general, different bases $\mathcal B,\mathcal B'$ lead to nonisomorphic presemifields $D_{\mathcal B}^\sigma,D_{\mathcal B'}^\sigma$. However, these presemifields are always isotopic. This is a consequence of the following two propositions.

\begin{proposition}\cite[Theorem 4.4.2]{Knu65}
Let $D_1,D_2$ be two presemifields of dimension $d$ over a finite field $GF(q)$, and let $\mathcal B_1,\mathcal B_2$  be two $GF(q)$-bases of $D_1$ and $D_2$. Then, $D_1$ and $D_2$ are isotopic if and only if there exist  three nonsingular maps
$F{^1},F{^2},F{^3}\in \hbox{GL}(d,q)$ such that 
$\mathbf A_{D_2,\mathcal B_2}=[F{^1},F{^2},F{^3}]\times\mathbf A_{D_1,\mathcal B_1}$ .
\end{proposition}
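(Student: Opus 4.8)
The plan is to prove both implications by unwinding the definition of isotopy in coordinates and recognizing the resulting identity as the $3$-cube transformation rule recalled above. Throughout I fix the bases $\mathcal B_1=[x_1,\dots,x_d]$ of $D_1$ and $\mathcal B_2=[y_1,\dots,y_d]$ of $D_2$, and write $\mathbf A=\mathbf A_{D_1,\mathcal B_1}=\{A_{i_1i_2i_3}\}$, $\mathbf B=\mathbf A_{D_2,\mathcal B_2}=\{B_{i_1i_2i_3}\}$, so that $x_ax_b=\sum_cA_{abc}x_c$ in $D_1$ and $y_iy_j=\sum_kB_{ijk}y_k$ in $D_2$.

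For the forward implication, suppose $(F,G,H)$ is an isotopy from $D_1$ to $D_2$, and let $\hat F,\hat G,\hat H\in\hbox{GL}(d,q)$ be its coordinate matrices, i.e. $F(x_j)=\sum_i\hat F_{ij}y_i$ and similarly for $G,H$; these matrices are invertible precisely because $F,G,H$ are bijective. I would expand both sides of the defining identity $H(x_ax_b)=F(x_a)G(x_b)$ in the basis $\mathcal B_2$: the left side yields $\sum_k\big(\sum_c\hat H_{kc}A_{abc}\big)y_k$, and the right side, after multiplying out and using $y_iy_j=\sum_kB_{ijk}y_k$ together with the centrality of the scalars, yields $\sum_k\big(\sum_{i,j}\hat F_{ia}\hat G_{jb}B_{ijk}\big)y_k$. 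Comparing coefficients of $y_k$ gives, for all $a,b,k$,
$$\sum_c\hat H_{kc}A_{abc}=\sum_{i,j}\hat F_{ia}\hat G_{jb}B_{ijk}.$$
The left-hand side is exactly $([I,I,\hat H]\times\mathbf A)_{abk}$ and the right-hand side is exactly $([\hat F^t,\hat G^t,I]\times\mathbf B)_{abk}$. Composing both sides with $[(\hat F^t)^{-1},(\hat G^t)^{-1},I]$ and invoking the product rule $[M_1,M_2,M_3]\times([N_1,N_2,N_3]\times\mathbf X)=[M_1N_1,M_2N_2,M_3N_3]\times\mathbf X$ (immediate from the definition of $\times$), I obtain $\mathbf B=[\hat F^{-t},\hat G^{-t},\hat H]\times\mathbf A$. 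Hence $F^1=\hat F^{-t}$, $F^2=\hat G^{-t}$, $F^3=\hat H$ are nonsingular maps with the asserted property.

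For the converse I would run the same computation backwards: given $F^1,F^2,F^3\in\hbox{GL}(d,q)$ with $\mathbf B=[F^1,F^2,F^3]\times\mathbf A$, define $F,G,H\colon D_1\to D_2$ to be the linear maps whose coordinate matrices are $(F^1)^{-t}$, $(F^2)^{-t}$, $F^3$ respectively; these are bijective since the matrices are invertible. Reversing the coefficient comparison shows $H(x_ax_b)=F(x_a)G(x_b)$ for all pairs of basis vectors, and since both sides are $GF(q)$-bilinear in $(a,b)$ this extends to $H(ab)=F(a)G(b)$ for all $a,b\in D_1$, so $(F,G,H)$ is an isotopy.

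The only genuine difficulty is bookkeeping: one must commit at the outset to a fixed convention relating a linear map to its matrix (which index is the row), and then keep straight which of the three tensor slots each map acts on and where transposes appear — concretely, the output slot receives $\hat H$ directly, while the two input slots receive $\hat F^{-t}$ and $\hat G^{-t}$. Once that convention is pinned down, nothing beyond routine reindexing and the product rule for $\times$ is required.
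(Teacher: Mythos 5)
Your proof is correct, and your transpose bookkeeping is even consistent with the paper's own change-of-basis formula $\mathbf A_{D,\mathcal B'}=[P,P,P^{-t}]\times\mathbf A_{D,\mathcal B}$ (taking the isotopy to be the identity map recovers it). The paper gives no proof of this proposition — it is quoted from Knuth's Theorem 4.4.2 — and your coordinate computation is essentially the standard argument behind that cited result, so there is nothing further to reconcile.
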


\begin{proposition}\cite[Theorem 4.2.3]{Knu65}\label{lema}
	Let $D$ be a presemifield of dimension $d$ over $GF(q)$, and let $\mathcal B$ be a $GF(q)$-basis of $D$. Then, for all $\sigma \in S_3$, and for all $F{^1},F{^2},F{^3}\in \hbox{GL}(d,q)$,
		 $$([F{^1},F{^2},F{^3}]\times \mathbf A_{D,\mathcal B})^\sigma=[F{^{\sigma^{-1}(1)}},F{^{\sigma^{-1}(2)}},F{^{\sigma^{-1}(3)}}]\times \mathbf A^\sigma_{D,\mathcal B}$$
\end{proposition}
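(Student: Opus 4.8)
The plan is to prove this by a direct index-chasing computation, expanding both sides using the definition of the bracket operation $[F^1,F^2,F^3]\times\mathbf A$ and the definition of the permuted cube $\mathbf A^\sigma$, and then matching terms. First I would write out the left-hand side: by definition of $\sigma$-permutation applied to the cube $\mathbf B := [F^1,F^2,F^3]\times\mathbf A_{D,\mathcal B}$, we have $(\mathbf B^\sigma)_{i_1i_2i_3} = B_{i_{\sigma(1)}i_{\sigma(2)}i_{\sigma(3)}}$, and then expand $B_{i_{\sigma(1)}i_{\sigma(2)}i_{\sigma(3)}} = \sum_{x_1,x_2,x_3} F^1_{i_{\sigma(1)}x_1}F^2_{i_{\sigma(2)}x_2}F^3_{i_{\sigma(3)}x_3}A_{x_1x_2x_3}$.

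Next I would do the same for the right-hand side. Writing $\mathbf C := \mathbf A^\sigma_{D,\mathcal B}$, so $C_{x_1x_2x_3} = A_{x_{\sigma(1)}x_{\sigma(2)}x_{\sigma(3)}}$, the right-hand side is
$$\big([F^{\sigma^{-1}(1)},F^{\sigma^{-1}(2)},F^{\sigma^{-1}(3)}]\times\mathbf C\big)_{i_1i_2i_3} = \sum_{x_1,x_2,x_3} F^{\sigma^{-1}(1)}_{i_1x_1}F^{\sigma^{-1}(2)}_{i_2x_2}F^{\sigma^{-1}(3)}_{i_3x_3}A_{x_{\sigma(1)}x_{\sigma(2)}x_{\sigma(3)}}.$$
The core of the argument is then a change of summation variable: in this last sum set $y_k = x_{\sigma^{-1}(k)}$ (equivalently relabel so that $x_{\sigma(j)}$ becomes the $j$-th new dummy), which is just a permutation of the three summation indices and hence leaves the (finite) sum unchanged. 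After this substitution the factor $A_{x_{\sigma(1)}x_{\sigma(2)}x_{\sigma(3)}}$ becomes $A_{y_1y_2y_3}$, and each factor $F^{\sigma^{-1}(k)}_{i_kx_k}$ becomes $F^{\sigma^{-1}(k)}_{i_k y_{\sigma(k)}}$; reindexing the product over $k$ by $j=\sigma(k)$ turns it into $\prod_j F^{j}_{i_{\sigma^{-1}(j)}\,y_j}$, wait — one must be careful here — the bookkeeping shows it matches $\prod_j F^{\,\cdot\,}_{i_{\sigma(\cdot)}y_{\cdot}}$ exactly as in the left-hand side expansion. Matching the two expressions term by term in $y_1,y_2,y_3$ then gives the identity.

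The only real obstacle is keeping the index bookkeeping straight: there are two independent relabelings (one on the "external" indices $i_k$ coming from the $\sigma$-permutation of the cube, one on the "internal" summation indices $x_k$), and the superscript relabeling $F^k \mapsto F^{\sigma^{-1}(k)}$ must be made to cancel correctly against them. The clean way to organize this is to check it first for the two generators of $S_3$ (a transposition, say $(1\,2)$, and a 3-cycle), where the substitutions are concrete and short, and then invoke the fact that if the identity holds for $\sigma$ and $\tau$ it holds for $\sigma\tau$ — this last composition step itself requires a short verification that both sides are "functorial" in $\sigma$, i.e. that $(\mathbf A^\sigma)^\tau = \mathbf A^{?}$ with the exponents composing the right way, but that is again a one-line index check. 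I expect the generator-by-generator approach to be less error-prone than the fully general substitution, though both work.
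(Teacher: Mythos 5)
The paper itself offers no proof of this proposition---it is quoted directly from Knuth (Theorem 4.2.3)---so the only question is whether your computation stands on its own. Your strategy (expand both sides, permute the dummy summation indices, reindex the product of the $F$'s) is the standard argument and does prove the statement, but the substitution as written is internally inconsistent. You set $y_k = x_{\sigma^{-1}(k)}$, i.e.\ $x_k = y_{\sigma(k)}$; under that change of variables $A_{x_{\sigma(1)}x_{\sigma(2)}x_{\sigma(3)}}$ becomes $A_{y_{\sigma^2(1)}y_{\sigma^2(2)}y_{\sigma^2(3)}}$, which equals $A_{y_1y_2y_3}$ only when $\sigma$ is an involution, whereas your parenthetical description (``$x_{\sigma(j)}$ becomes the $j$-th new dummy'') is the \emph{other} substitution $y_j = x_{\sigma(j)}$. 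Commit to the latter: with $y_j = x_{\sigma(j)}$, equivalently $x_m = y_{\sigma^{-1}(m)}$, the factor $A_{x_{\sigma(1)}x_{\sigma(2)}x_{\sigma(3)}}$ becomes $A_{y_1y_2y_3}$, each $F^{\sigma^{-1}(k)}_{i_k x_k}$ becomes $F^{\sigma^{-1}(k)}_{i_k y_{\sigma^{-1}(k)}}$, and reindexing the product by $j=\sigma^{-1}(k)$ turns it into $\prod_j F^{j}_{i_{\sigma(j)}y_j}$, which is exactly the expansion of the left-hand side $\sum F^1_{i_{\sigma(1)}y_1}F^2_{i_{\sigma(2)}y_2}F^3_{i_{\sigma(3)}y_3}A_{y_1y_2y_3}$. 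So the proof is correct once this one line of bookkeeping is fixed.

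Your fallback---verify the identity for a transposition and a $3$-cycle and then compose---also works, but it is not obviously less delicate: you must first pin down the composition law for the cube action, which with the paper's convention is $(\mathbf A^{\sigma})^{\tau} = \mathbf A^{\tau\sigma}$, and then check that the superscripts compose compatibly via $\sigma^{-1}\tau^{-1} = (\tau\sigma)^{-1}$. That is the same kind of index discipline you are trying to avoid, so the single direct substitution, done once carefully, is the shorter route.
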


The number of projective planes that can be constructed from a given finite semifield $D$ using the transformation of the group $S_3$ is at most six \cite[Theorem 5.2.1]{Knu65}. Actually, $S_3$ acts on the set of semifield planes of a given order. So, the classification of finite semifields can be reduced to the classification of the corresponding projective planes up to isotopy and, ultimately, this can be reduced to the classification of semifield planes up to the action of the group $S_3$. In this setting, we will consider a plane as \emph{new} when no \emph{known}\footnote{We have
considered as \emph{known} semifields those appearing in the, up to our knowledge, last survey on the topic, \cite{Kantor}} finite semifield coordinatizes a plane in its $S_3$-orbit.

Invariance under the transformation by $S_3$ leads to the following definition.

\begin{definition}
	Let $D$ be a finite presemifield $d$-dimensional over $GF(q)$, and let $\sigma\in S_3$. Then, $D$ is called:
	\begin{itemize}
		\item {\bf $\sigma$-invariant}, if there exists a $GF(q)$-basis $\mathcal B$ of $D$, such that $\mathbf A^\sigma_{D,\mathcal B}=\mathbf A_{D,\mathcal B}$.
		\item {\bf $\sigma$-isotopic}, if there exists a $GF(q)$-basis $\mathcal B$ of $D$, such that $D^\sigma_{\mathcal B}$ is isotopic to $D$.
	\end{itemize}
\end{definition}

Clearly, if a presemifield is $\sigma$-invariant, then it is $\sigma$-isotopic.
	Different well-known notions can be rephrased in terms of this definition.

\begin{example}
 A finite presemifield:
	\begin{itemize}
		\item Is commutative iff it is $(1,2)$-invariant.
		\item Is symplectic \cite{codigosdesemicuerpos} iff it is $(1,3)$-invariant.
		\item Induces a self-dual plane \cite{Knu65} iff it is $(1,2)$-isotopic.
		\item Induces a self-transpose plane \cite{Kan03} iff it is $(1,3)$-isotopic.
	\end{itemize}
\end{example}

It is clear that, if $D$ is $\sigma$-invariant ($\mathbf A^\sigma_{D,\mathcal B}=\mathbf A_{D,\mathcal B}$, where $\mathcal B$ is a $GF(q)$-basis of $D$), then 
 $\mathbf A^{\sigma\tau}_{D,\mathcal B}=\mathbf A^\tau_{D,\mathcal B}$, for all $\tau \in S_3$. Similarly, 
 as a consequence of Proposition \ref{lema}, if $D$ is $\sigma$-isotopic (i.e., $D^\sigma_{\mathcal B}$ is an isotope of $D_{\mathcal B}$, where $\mathcal B$ is a $GF(q)$-basis of $D$), then $D^{\sigma\tau}_{\mathcal B}$ is an isotope of $D^\tau_{\mathcal B}$.
 This fact will be useful in the description of all possible situations depending on the orbits of $S_3$ (cf. \cite[Proposition 3.8]{Kan03},\cite[Corollary 5.2.2]{Knu65}).
 We shall use a graphical representation to distinguish between the different cases. 
The vertices of an hexagon will depict the six different planes obtained from a given finite semifield (cf. \cite[Theorem 5.2.1]{Knu65}).

$$\hbox{ }\hspace{.5cm}\includegraphics[width=3cm,height=3cm]{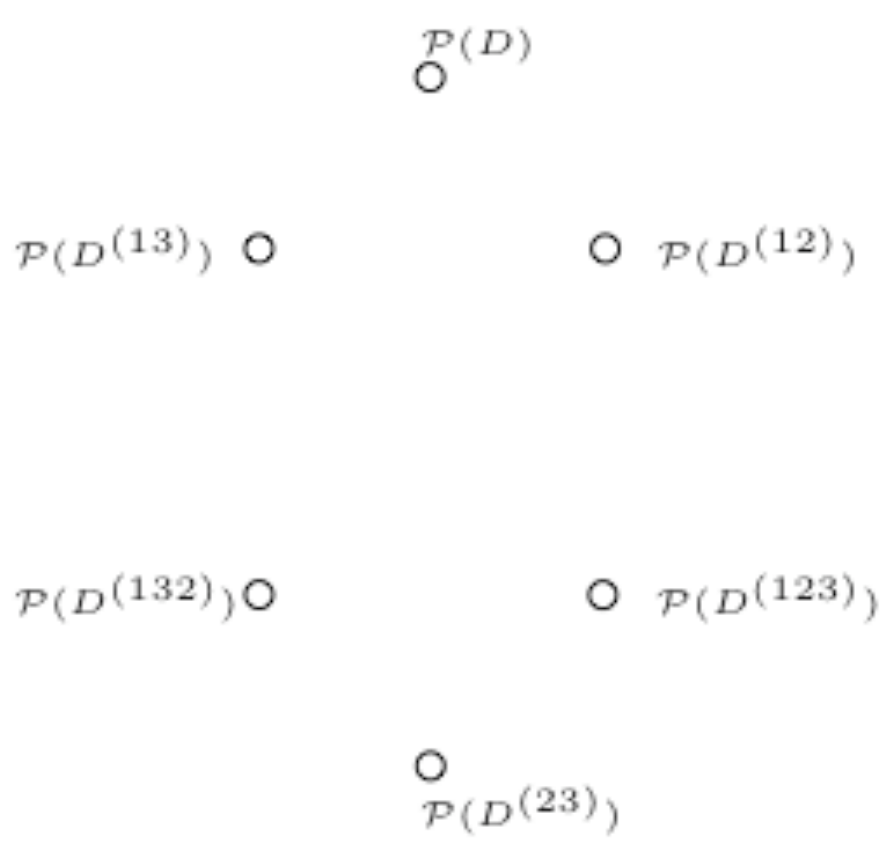}$$

A dotted line between two planes shows that the corresponding finite semifields are $\sigma$-isotopic (where $\sigma$ is a suitable element of $S_3$). A continous line stands for $\sigma$-invariance of one of the corresponding coordinatizing finite presemifields.
Here are the different posibilities (cf. \cite[Corollary 5.2.2]{Knu65},\cite[Proposition 3.8]{Kan03}).

\begin{itemize}
	\item A unique orbit consisting of six planes.
	
		$$\includegraphics[width=1cm,height=1cm]{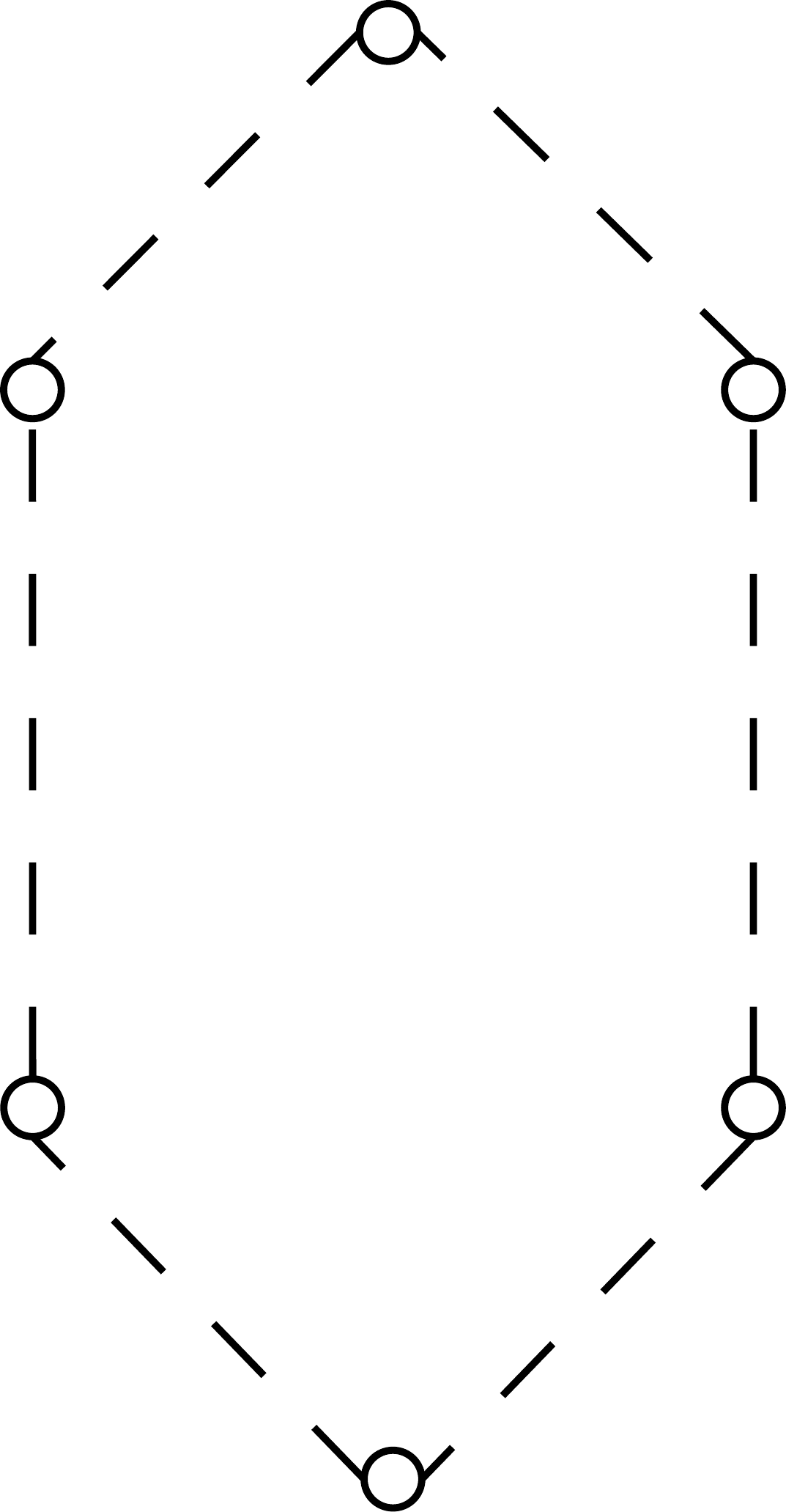}\hspace{2cm}\includegraphics[width=1cm,height=1cm]{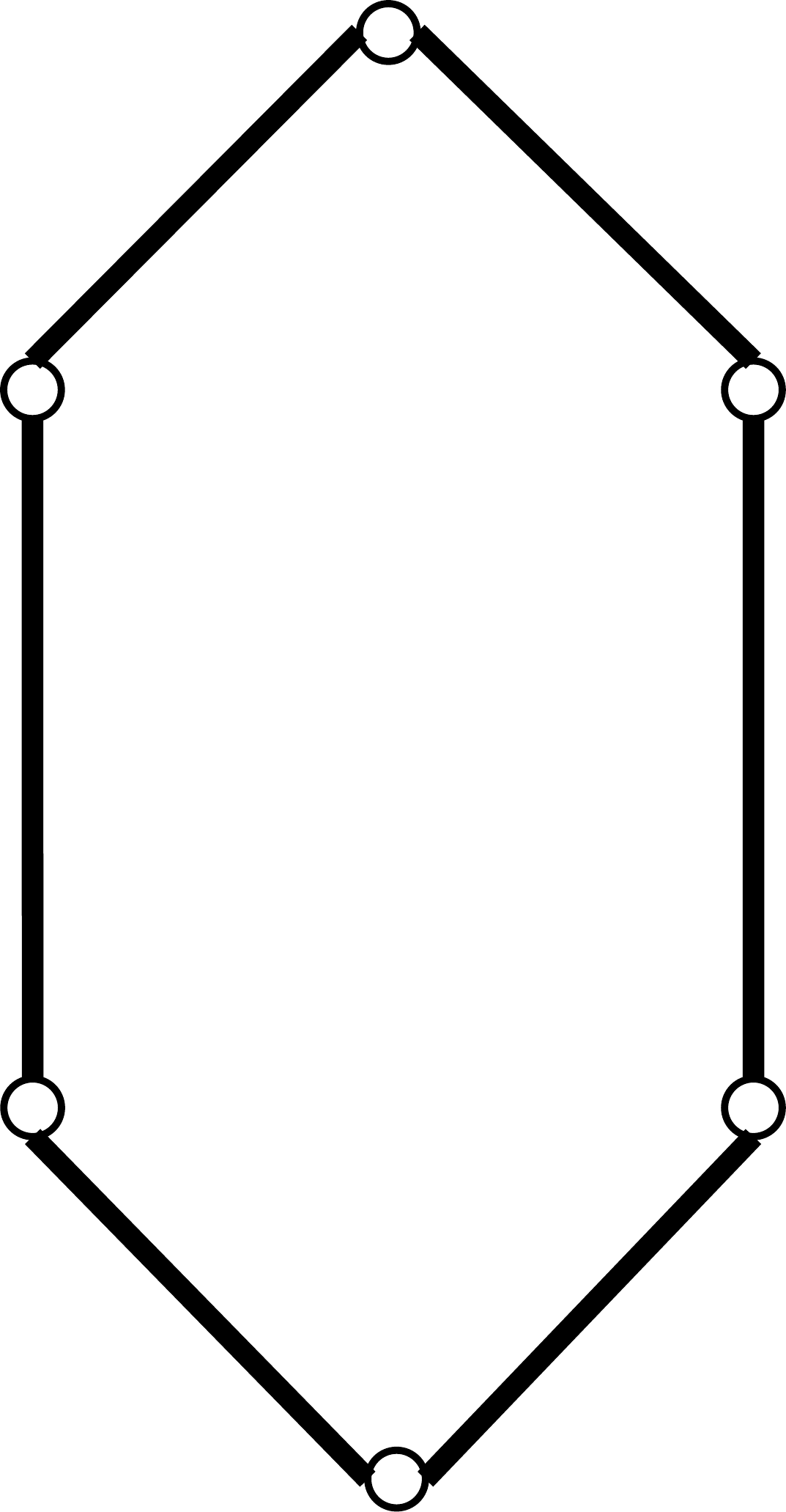}$$
	
	\item Two orbits of three planes each.
	
		$$\includegraphics[width=1cm,height=1cm]{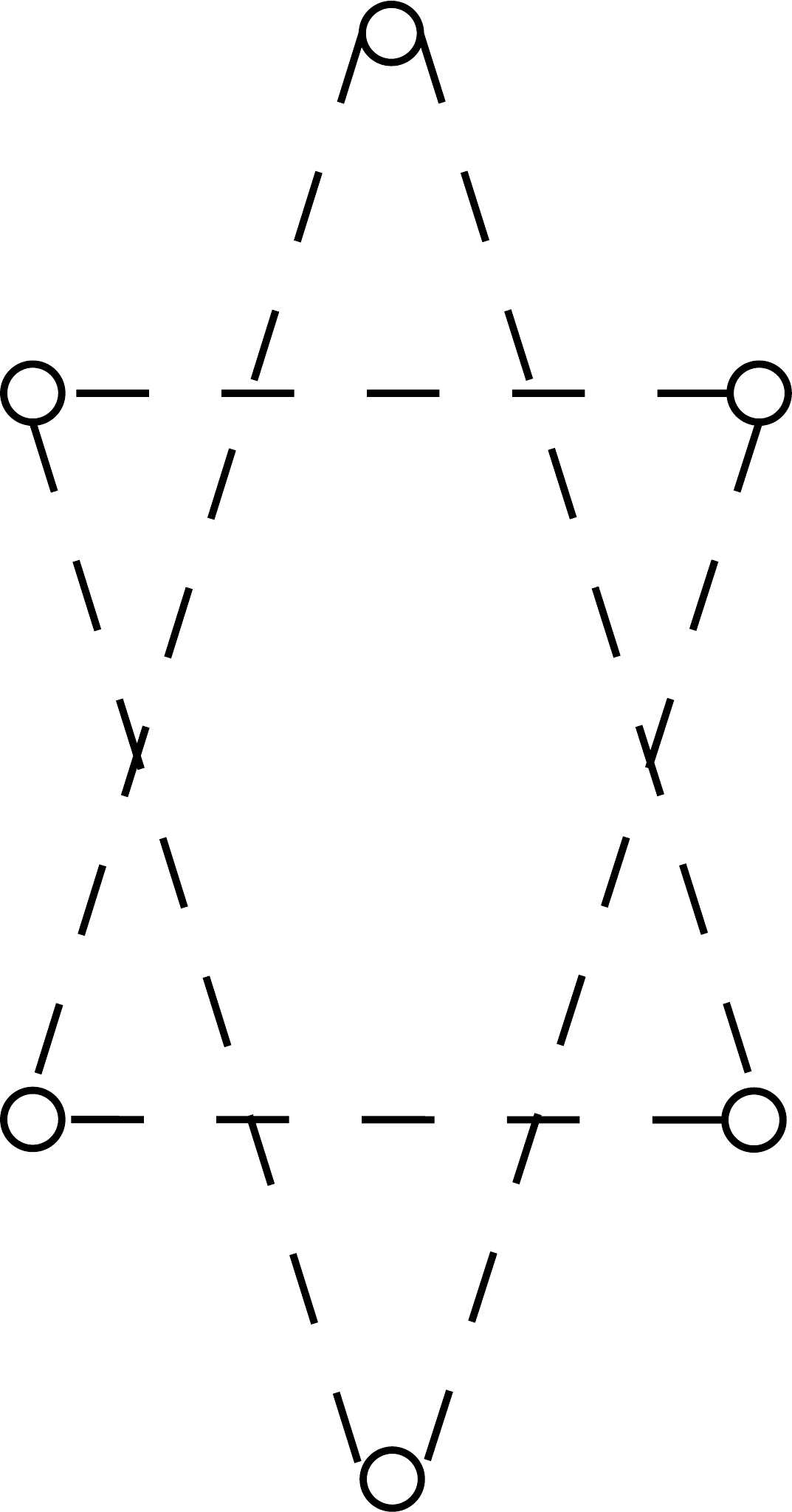}\hspace{2cm}\includegraphics[width=1cm,height=1cm]{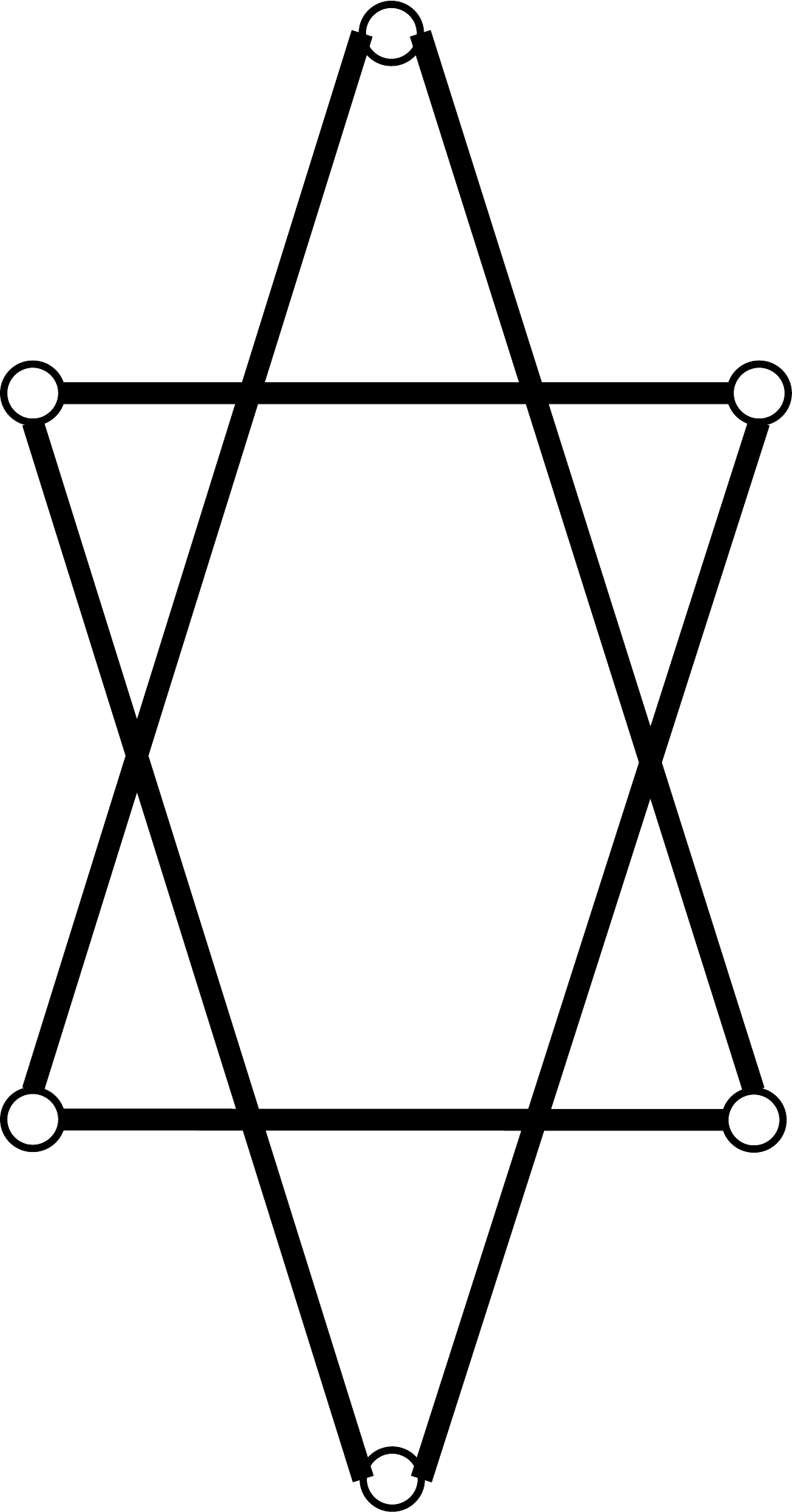}$$

	\item Three orbits of two planes each.
	
		$$\includegraphics[width=1cm,height=1cm]{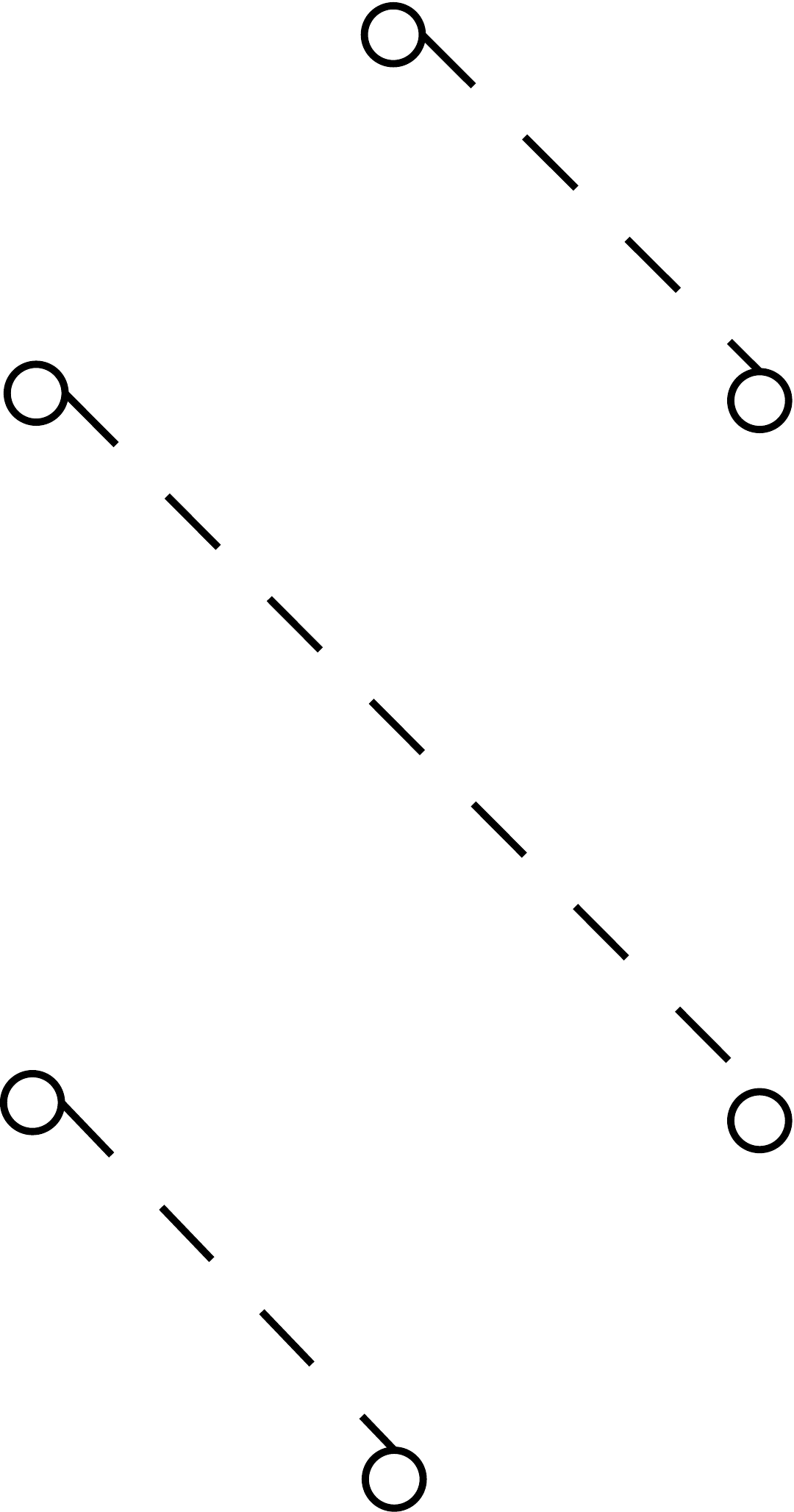}\hspace{2cm}
		\includegraphics[width=1cm,height=1cm]{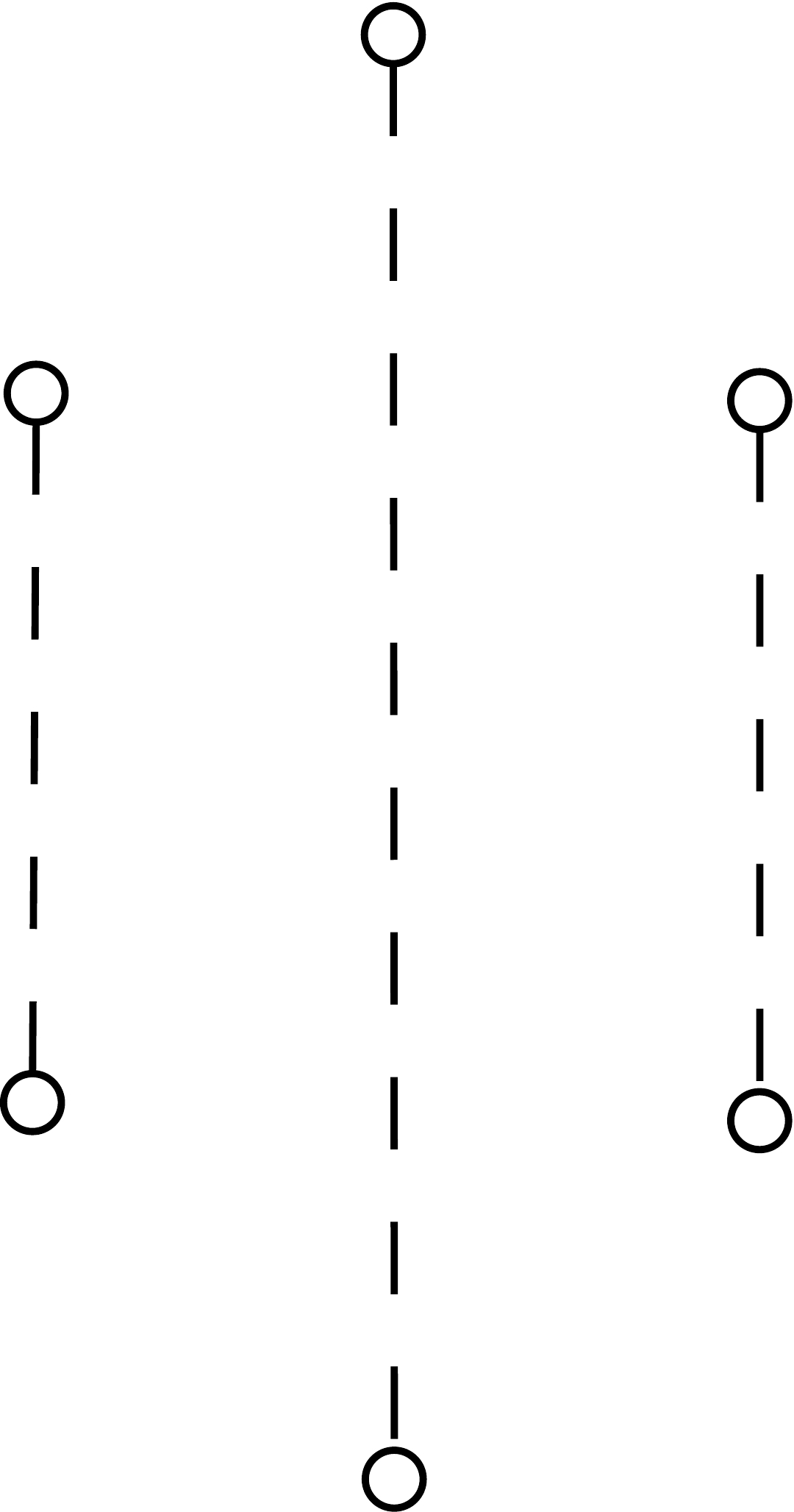}\hspace{2cm}
		\includegraphics[width=1cm,height=1cm]{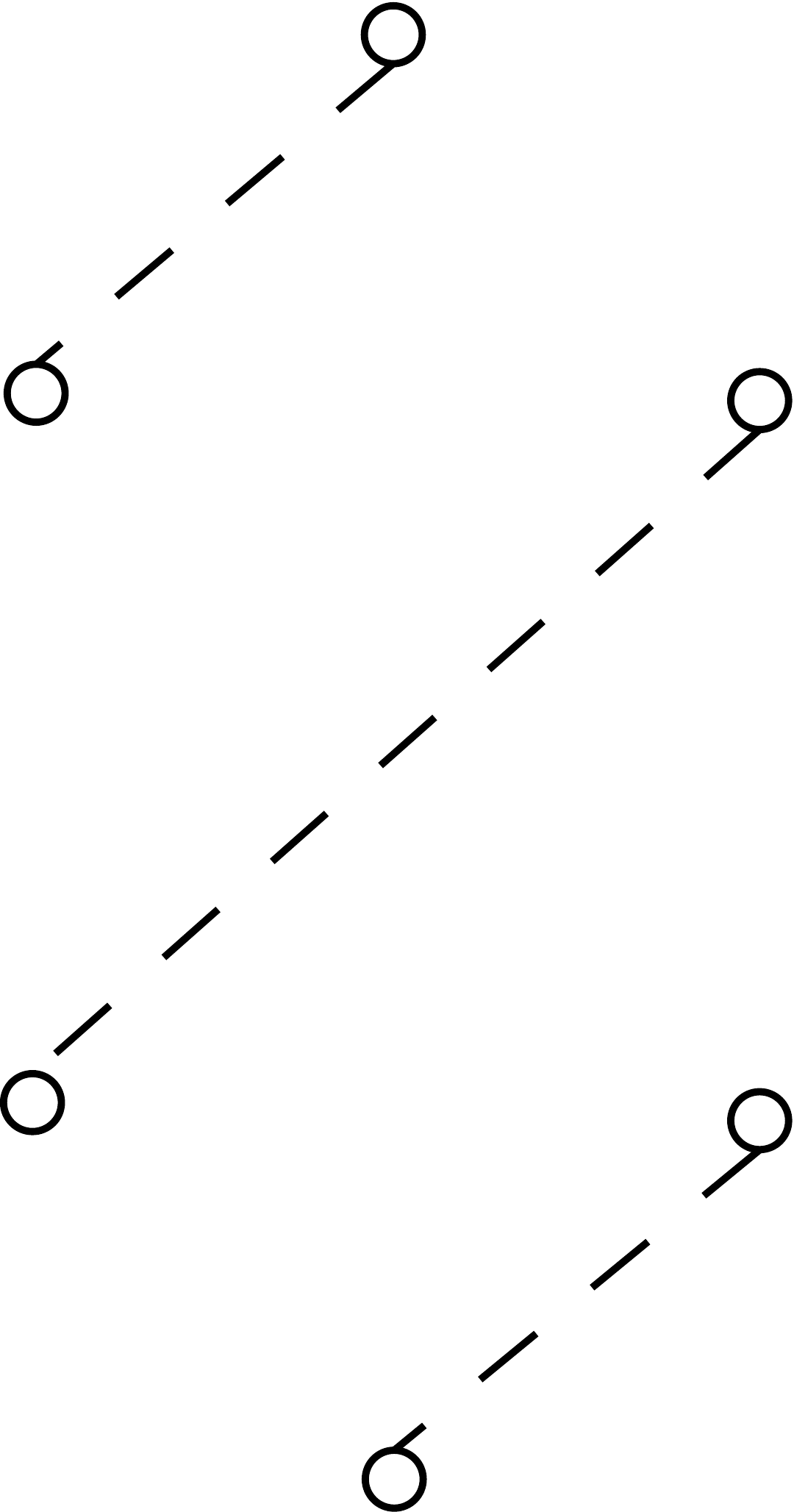}$$
		$$
		\includegraphics[width=1cm,height=1cm]{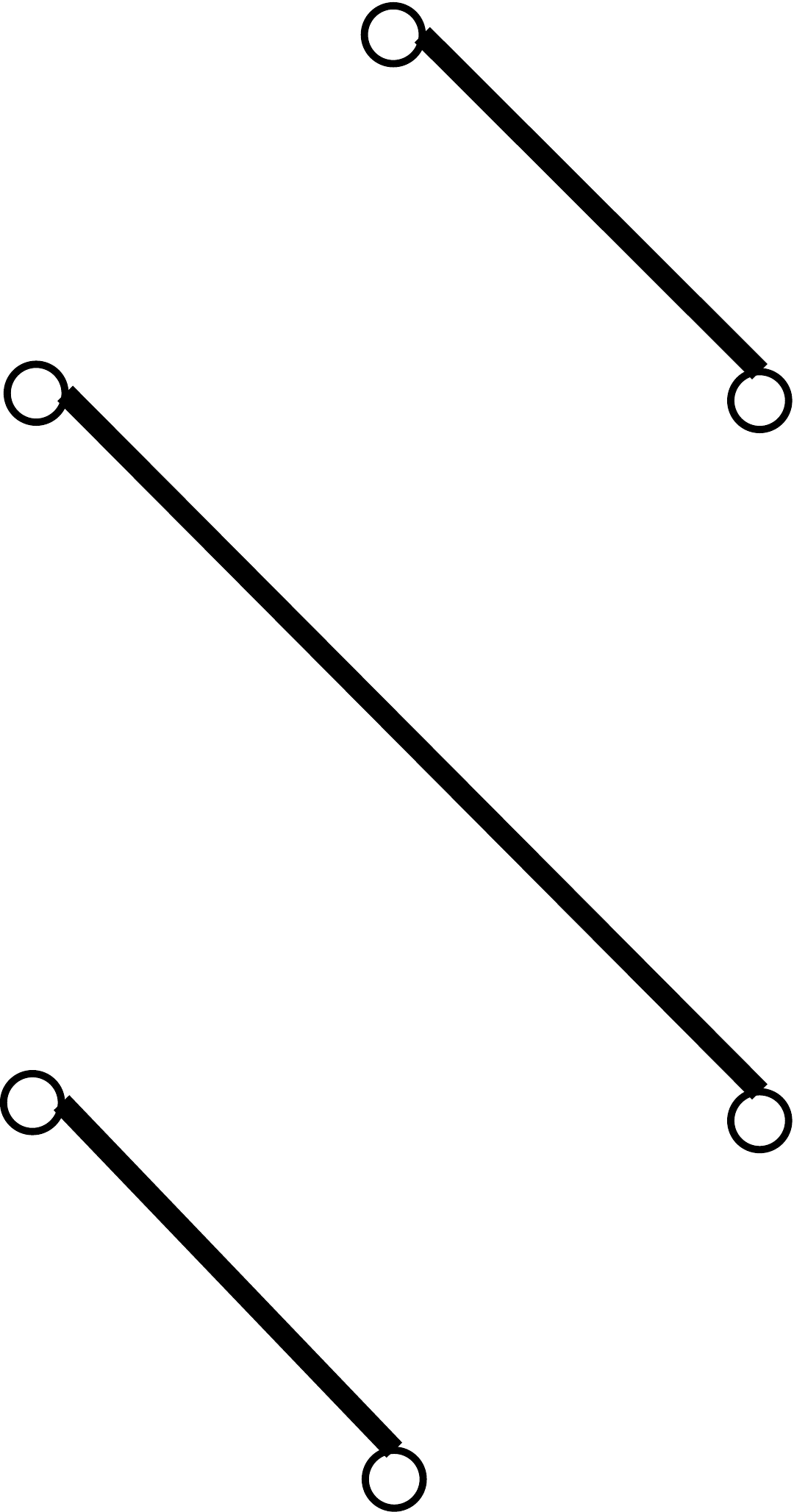}\hspace{2cm}\includegraphics[width=1cm,height=1cm]{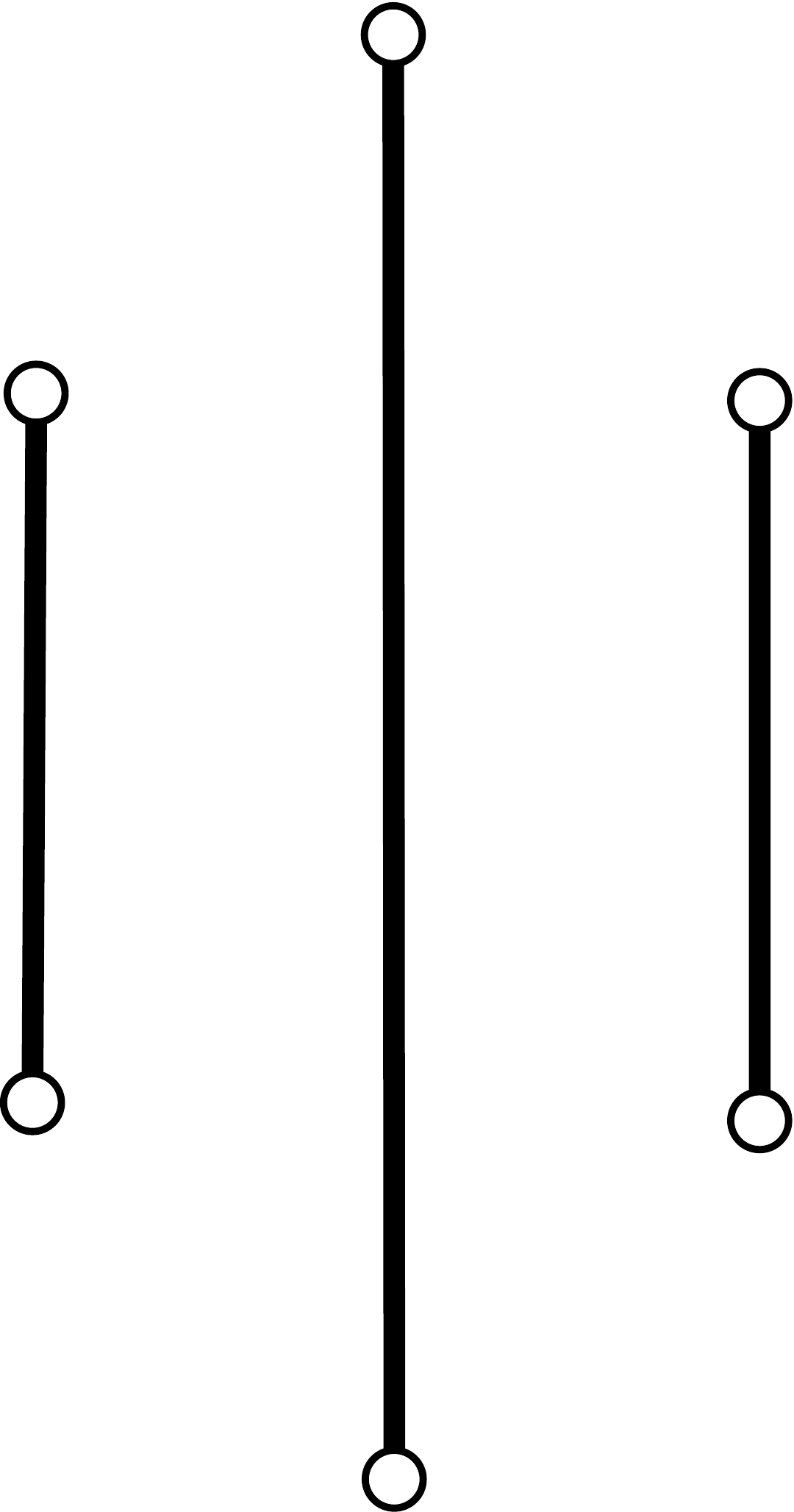}
	\hspace{2cm}\includegraphics[width=1cm,height=1cm]{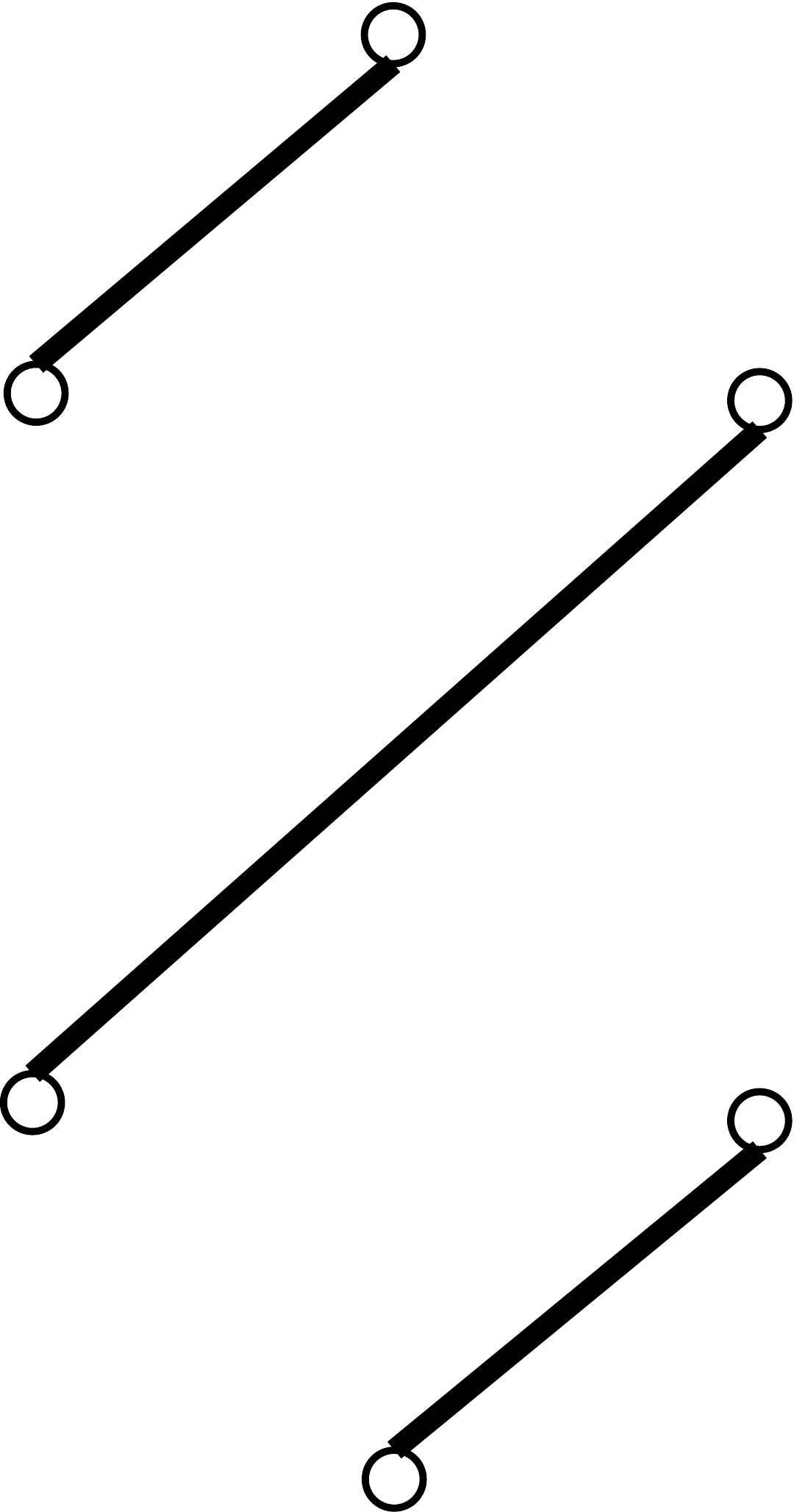}$$
	
	\item Six different planes.
	
	$$\includegraphics[width=1cm,height=1cm]{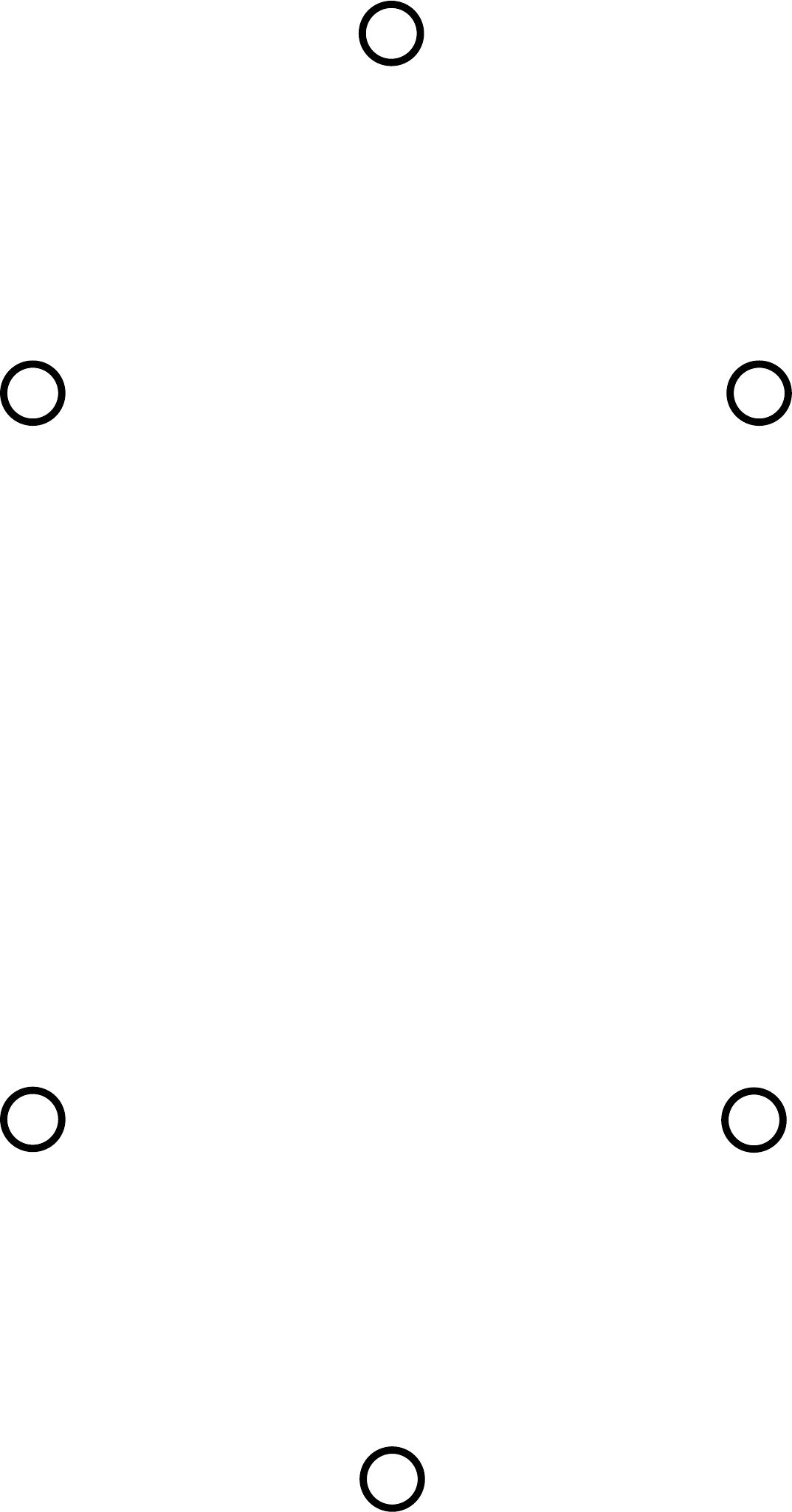}$$
		
\end{itemize}

We finish this section recalling that 
the construction of finite semifields of a given order can be rephrased
as a matrix problem \cite[Proposition 3]{Hentzel}.

\begin{proposition}\label{matrices}
    There exists a finite semifield $D$ of
    dimension $d$ over its center $Z(D)\supseteq GF(q)$ if, and only if, there exists a
    set of $d$ matrices $\{A_1,\dots,A_d\}\subseteq GL(d,q)$ such
    that:
    \begin{enumerate}
        \item $A_1$ is the identity matrix;
        \item $\sum_{i=1}^d\lambda_i A_i\in GL(d,q)$, for all
        \textbf{non-zero tuples}
        $(\lambda_1,\dots,\lambda_d)\in \prod^d GF(q)$, that is,
        $(\lambda_1,\dots,\lambda_d)\not=\{\overrightarrow 0\}$.
        \item The first column of the matrix $A_i$ is the column vector
        $e_i^\downarrow$ with a $1$ in
    the $i$-th position, and $0$ everywhere else.
    \end{enumerate}
\end{proposition}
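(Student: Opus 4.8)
The plan is to pass back and forth between the multiplication of the semifield and the family of its left-multiplication operators, proving the two implications separately. For the forward implication, suppose such a $D$ exists and fix a $GF(q)$-basis $x_1,\dots,x_d$ of $D$ with $x_1$ the identity $e$. For each $i$, let $A_i$ be the $d\times d$ matrix over $GF(q)$ of the $GF(q)$-linear map $L_{x_i}\colon D\to D$, $y\mapsto x_iy$, written with the convention that the $j$-th column of $A_i$ is the coordinate vector of $x_ix_j$. Then condition (1) is immediate, since $L_{x_1}=L_e=\mathrm{id}$; condition (3) holds because the first column of $A_i$ is the coordinate vector of $x_ix_1=x_ie=x_i$, i.e.\ $e_i^\downarrow$. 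For condition (2), left distributivity together with $GF(q)\subseteq Z(D)$ gives $\sum_i\lambda_iL_{x_i}=L_a$ where $a=\sum_i\lambda_ix_i$; if the tuple is nonzero then $a\ne 0$, and since $D^*$ is a multiplicative loop the operator $L_a$ is bijective, so $\sum_i\lambda_iA_i=L_a\in GL(d,q)$.

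For the converse, given $A_1,\dots,A_d$ satisfying (1)--(3), endow $V=GF(q)^d$ (standard basis $e_1,\dots,e_d$) with the $GF(q)$-bilinear product determined by letting $e_i\cdot e_j$ be the $j$-th column of $A_i$; equivalently, $u\cdot v=\bigl(\sum_i\mu_iA_i\bigr)v$ for $u=\sum_i\mu_ie_i$. Bilinearity makes $(V,+,\cdot)$ a $GF(q)$-algebra, in particular a nonassociative ring. By (1), $e_1\cdot v=A_1v=v$, and by (3), $v\cdot e_1=\bigl(\sum_i\nu_iA_i\bigr)e_1=\sum_i\nu_ie_i=v$, so $e_1$ is a two-sided identity. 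If $u,v\in V$ are both nonzero, then $\sum_i\mu_iA_i$ is invertible by (2) and hence $u\cdot v=\bigl(\sum_i\mu_iA_i\bigr)v\ne 0$; thus $V\setminus\{0\}$ is closed under the product, so $V$ is a finite semifield. A routine check (each $\lambda e_1$ commutes and associates with every element) shows $GF(q)$ embeds into $Z(V)$, so $V$ is $d$-dimensional over a field contained in its center, as required.

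I do not expect a genuine obstacle here; the content is essentially bookkeeping. The two points that require care are the translation between abstract linear maps and concrete matrices — so that conditions (1) and (3) come out exactly as stated rather than in a transposed form, which is precisely why one uses left multiplications together with the ``columns record images'' convention — and the standard but crucial fact that in a finite nonassociative ring whose nonzero elements are multiplicatively closed, the left and right translations by nonzero elements are injective (no zero divisors) and hence, by finiteness, bijective; this is what converts multiplicative closure of $D^*$ into the full loop property used implicitly in the converse and, in the forward direction, into the invertibility giving condition (2).
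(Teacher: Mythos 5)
Your proof is correct, and it is essentially the standard argument for this equivalence (the paper itself gives no proof, citing it from \cite{Hentzel}, Proposition 3): identify the $A_i$ with multiplication operators in a basis whose first element is the identity, and conversely read the $A_i$ as a multiplication table. The only cosmetic difference is that you take the $A_i$ to be matrices of the left multiplications $L_{x_i}$, whereas the paper (in $\S 3$) interprets them as matrices of the right multiplications $R_{x_i}$; by the symmetry of conditions (1)--(3) both conventions work, the two constructions in the converse direction simply yielding opposite (anti-isomorphic) semifields.
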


\section{New Semifield Planes of order 81: a classification}

This section is devoted to the study of semifield planes of order 81. All these planes have been obtained by exhaustive search of coordinatizing semifields, and a complete classification has been later achieved.

Let us begin by fixing some notation. If $D$ is a finite semifield
with 81 elements, then it is a finite nonassociative algebra of
dimension 4 over $\ZZ_3$. From \cite{Hentzel}[Proposition 3] there exists a set of matrices
$\{A_1=I_4,A_2,A_3,A_4\}\subseteq GL(4,3)$ with the first
column of $A_i$ equal to the vector $e_i^\downarrow$
($i=1,\dots,4$), such that $\sum_{i=1}^4\lambda_i A_i\in GL(4,3)$,
for all non-zero tuples
        $(\lambda_1,\lambda_2,\lambda_3,\lambda_4)\in \prod^4 \ZZ_3$. 
        These matrices are the
        coordinate matrices of the maps $R_{x_i}$ where
        $\mathcal B=\{x_1=e,x_2,x_3,x_4\}$ is a $\ZZ_3$-basis of $D$.
Hence, the existence of 81-element semifields is reduced to the existence of sets of 4 matrices satisfying certain conditions.
The first of these matrices is always the identity matrix. Let us now show that the second matrix can also be chosen from a small amount of matrices.

\begin{proposition}\label{base81}
If $D$ is a finite semifield with 81 elements,
then there is a $\ZZ_3$-basis $\mathcal
B=\{x_1=e,x_2,x_3,x_4\}$ of $D$ such that the coordinate
matrix of $R_{x_2}$ is a companion matrix of one of the following polynomials:
$$(1)\ x^4+x+2\;  (2)\ x^4+2x+2\; (3)\ x^4+x^3+2$$
$$ (4)\ x^4+x^3+x^2+2x+2\; (5)\ x^4+x^3+2x^2+2x+2\;  (6)\ x^4+2x^3+2$$$$ (7)\ x^4+2x^3+x^2+x+2\; (8)\ x^4+2x^3+2x^2+x+2$$
\end{proposition}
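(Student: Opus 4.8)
The plan is to find inside $D$ a single element $a$ whose right‑multiplication operator $R_a$ is as regular as possible, to use it to build a cyclic basis, and then to normalize. One constraint is immediate: for any $a\in D\setminus\ZZ_3 e$ and any $\lambda\in\ZZ_3$ the map $R_a-\lambda\,\mathrm{id}=R_{a-\lambda e}$ is right‑multiplication by the nonzero element $a-\lambda e$, hence lies in $GL(4,3)$. Thus the characteristic polynomial $\chi_a$ of $R_a$ has no root in $\ZZ_3$ and, in particular, has degree $4$ and nonzero constant term.

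The heart of the matter is to produce $a\in D\setminus\ZZ_3 e$ such that $\{e,\,a,\,a\cdot a,\,(a\cdot a)\cdot a\}$ is a $\ZZ_3$‑basis of $D$, equivalently such that $e$ is a cyclic vector for $R_a$. I would obtain this by finding $a$ with $\chi_a$ \emph{irreducible} over $\ZZ_3$: the set $\mathcal R=\{R_x:x\in D\}$ is a $4$‑dimensional $\ZZ_3$‑subspace of $M_4(\ZZ_3)$ all of whose nonzero elements are invertible, and as soon as one of them has an irreducible characteristic polynomial it is a regular (cyclic) matrix, so \emph{every} nonzero vector, and in particular $e$, is cyclic for it. Given such an $a$, put $x_1=e$, $x_2=a$, $x_3=a\cdot a$, $x_4=(a\cdot a)\cdot a$; then $R_{x_2}(x_1)=x_2$, $R_{x_2}(x_2)=x_3$ and $R_{x_2}(x_3)=x_4$, so the coordinate matrix of $R_{x_2}$ in the basis $\{x_1,x_2,x_3,x_4\}$ is precisely the companion matrix of $\chi_a$.

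Next I would normalize $\chi_a$. Replacing $a$ by $\alpha a+\beta e$ (with $\alpha\in\ZZ_3^{*}$, $\beta\in\ZZ_3$) keeps $e$ cyclic for $R_a$ and replaces $\chi_a$ by its image under the affine substitution $x\mapsto\alpha^{-1}(x-\beta)$. A direct computation shows that a monic quartic taking the value $1$ at all three points of $\ZZ_3$ is reducible (it is the square of a quadratic); hence an irreducible $\chi_a$ takes the value $2$ at some point of $\ZZ_3$, so a suitable translation makes its constant term equal to $2$ and, if necessary, a further translation makes the coefficient of $x^3$ (or of $x$) nonzero. Thus $\chi_a$ may be taken to be a monic irreducible quartic over $\ZZ_3$, with constant term $2$, that is not an even polynomial, and a finite enumeration shows that the quartics with these three properties are exactly the eight polynomials $(1)$--$(8)$.

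The main obstacle is the existence step of the second paragraph: ruling out the possibility that every nonzero element of $\mathcal R$ has characteristic polynomial equal to a product of two (possibly equal) irreducible quadratics, and, in the same spirit, the edge case in which the only available irreducible characteristic polynomials lie in the single orbit of $x^4+x^2+2$ under the substitutions $x\mapsto\alpha^{-1}(x-\beta)$, an orbit that contains no representative of the required shape. I do not expect a short a priori argument for these points; this is exactly where the exhaustive computational search does the real work, by letting one check the statement on each of the finitely many isotopy classes of $81$‑element semifields.
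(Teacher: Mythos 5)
Your setup (right multiplications form a $4$-dimensional space of matrices whose nonzero elements are invertible, an irreducible characteristic polynomial makes every nonzero vector cyclic, and the cyclic basis $\{e,a,a\cdot a,(a\cdot a)\cdot a\}$ turns $R_a$ into a companion matrix) is sound, and your characterization of the eight listed polynomials as the monic irreducible quartics over $\ZZ_3$ with constant term $2$ that are not polynomials in $x^2$ is correct — these are exactly the primitive quartics over $\ZZ_3$. But the argument is not a proof, and you say so yourself: the existence of an element $a$ with irreducible characteristic polynomial is not established, and your affine renormalization $x\mapsto\alpha^{-1}(x-\beta)$ demonstrably cannot reach the required shape from the orbit of $x^4+x^2+2$ (the ``further translation'' step also disturbs the constant term, so it is not available in general). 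That existence step is precisely what the paper's proof supplies, and it is not obtained by normalizing an irreducible polynomial: it invokes the primitivity theorem of Hentzel and R\'ua \cite{Hentzel} (Theorem 2 there), which asserts that every finite semifield with $81$ elements is left and right primitive; by Proposition 2 of \cite{Hentzel} this gives an element $x_2$ whose right multiplication $R_{x_2}$ has a \emph{primitive} characteristic polynomial of degree $4$ over $\ZZ_3$, and the eight polynomials in the statement are exactly the degree-$4$ primitive polynomials over $\ZZ_3$ as listed in \cite{Lidl}. Irreducibility then yields the basis of right powers of $x_2$ and the companion matrix, as in your second paragraph.

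Deferring the missing step to ``the exhaustive computational search'' is also circular in the context of this paper: the search algorithm is justified by, and restricted to, the eight companion matrices that this very proposition provides, so the proposition must be in place before the search (an unrestricted enumeration over all possible $A_2$ would be a different, much larger computation, and is not what the paper does). So the gap is genuine, and the missing ingredient is the primitivity of $81$-element semifields — a nontrivial external theorem — rather than anything affine substitutions applied to an irreducible characteristic polynomial can deliver.
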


\begin{proof}
	From \cite{Hentzel}[Theorem 2] the finite semifield $D$ is left and right primitive \cite{Wene91}. Therefore, there exists an element $x_2\in D$ such that the characteristic polynomial $p_{x_2}(x)$ of the linear transformation $R_{x_2}$ is a primitive polynomial of degree 4 over $\ZZ_3$ \cite{Hentzel}[Proposition 2]. So, in view 
of the list of the primitive polynomials of order 4 over $\ZZ_3$ \cite{Lidl} it must be one of the eight listed above. Moreover, in particular, $p_{x_2}(z)$ is irreducible in $\ZZ_3[z]$,  and so $\mathcal B=\{1,x_2,{x_2}^{2)},{x_2}^{3)}\}$ is a linearly independent set, i.e., it is a $\ZZ_3$-basis of $D$. The coordinate matrix of $R_{x_2}$ with respect to such a basis is clearly a companion matrix. 
\end{proof}

We have used this property to design a search algorithm for 81-element finite semifields. 
Our algorithm, which is a rather standard backtracking method (cf.
\cite{Walker}), is written below with the help of two auxiliary functions. The
first one ({\it Complete}) enumerates all valid semifields with given
initial matrices. The second one ({\it Complete2}) enumerates all
valid semifield with given initial matrices and some columns of the
next matrix.
The function {\it Complete} uses the already known matrices to create the initial columns of the next
matrix and then calls {\it Complete2}. This second function, in turn,
recursively adds columns to the incomplete matrix (backtracking if
necessary) and then calls {\it Complete} with new matrix.

\vspace{0.3cm}\hrule\vspace{0.3cm}
\centerline{{\bf Algorithm 1:} Search algorithm for finite semifields}
\vspace{0.3cm}\hrule\vspace{0.3cm}

\textbf{$\bullet $ Input:} Characteristic $p$ and dimension $n$ of the semifields, second matrix $A_2$

\textbf{$\bullet$ Output:} List of matrices representing all the semifields with second matrix $A_2$, of the given characteristic and
  dimension

\textbf{$\bullet$ Procedure:}

{$\hbox{ }$ Create} an empty list of matrices $L$

{$\hbox{ }$ Insert} the identity $I$ in $L$

{$\hbox{ }$ Insert} $A_2$ in $L$

{$\hbox{ }$ Call} $Complete(L,p,n)$
\vspace{0.3cm}\hrule\vspace{0.3cm}

\vspace{0.3cm}\hrule\vspace{0.3cm}
\centerline{{\bf Algorithm 2:} Function $Complete$ }
\vspace{0.3cm}\hrule\vspace{0.3cm}

\textbf{$\bullet $ Input:} A list of matrices $L$, the characteristic $p$ and the dimension $n$

\textbf{$\bullet$ Output:} List of matrices representing all the semifields with initial matrices $L$, of the given characteristic and
  dimension

\textbf{$\bullet$ Procedure:}

$\hbox{ }$ $m \leftarrow $ size of $L$

\textbf{$\hbox{ }$ if} $m$ is equal to $n$ \textbf{then}

 \textbf{\hspace{.7cm} return} $L$
 
 \textbf{$\hbox{ }$ end}
 
  \textbf{$\hbox{ }$ else}
  
 \textbf{\hspace{.7cm}}	Create a matrix $M$ of 1 column 
 
  \textbf{\hspace{.7cm}}
	Set the first column of $M$ equal to the $(m+1)$-th column of the identity
	
	 \textbf{\hspace{.7cm}}
	Call $Complete2(L,M,p,n)$
	
	 \textbf{$\hbox{ }$ end}
\vspace{0.3cm}\hrule\vspace{0.3cm}

\vspace{0.3cm}\hrule\vspace{0.3cm}
\centerline{{\bf Algorithm 3:} Function $Complete2$}
\vspace{0.3cm}\hrule\vspace{0.3cm}

\textbf{$\bullet $ Input:} A list of matrices $L$, a truncated matrix $M$, the characteristic $p$ and the dimension $n$

\textbf{$\bullet$ Output:} List of matrices representing all the semifields with initial matrices $L$ and $M$, of the given characteristic and
  dimension

\textbf{$\bullet$ Procedure:}

$\hbox{ }$ $k \leftarrow $ number of columns of $M$

\textbf{$\hbox{ }$ if} $k$ is equal to $n$ \textbf{then}

    \textbf{\hspace{.7cm}} Insert $M$ in $L$ 
    
  \textbf{\hspace{.7cm}} Call $Complete(L,p,n)$ 

\textbf{$\hbox{ }$ end}

\textbf{$\hbox{ }$ else}

{\hspace{.7cm}} Compute $C$, the list of columns $c$ such that the join of $M$ and $c$ is 

{\hspace{.7cm}}  linearly independent 
	of the matrices of $L$ (truncated at the $k+1$ first 
	
	{\hspace{.7cm}} 
	columns)

 \textbf{\hspace{.7cm} for} each $c$ in $C$ \textbf{do}

        \textbf{\hspace{1.3cm}} Join $c$ to $M$, as its $k+1$-th column
                
      \textbf{\hspace{1.3cm}}  Call $Complete2(L,M,p,n)$
      
      \textbf{\hspace{1.3cm}} Remove $c$ from $M$
      
       \textbf{\hspace{.7cm} end}
     
     \textbf{$\hbox{ }$ end}
\vspace{0.3cm}\hrule\vspace{0.3cm}

We have implemented
this algorithm in the language C++. The run time on a 
2GHz desktop computer was about 150 seconds (for each separate case). As output, the following number of tuples were obtained:

$$\begin{tabular}{|c|c|}
  \hline
  Form of $A_2$ & Number of tuples $(A_3,A_4,A_5,A_6)$ \\
  \hline
  (1) & 6811 \\
  (2) &  6811\\
  (3) &  6811\\
  (4) & 7866\\
  (5) &  7866\\
  (6) & 6811 \\
  (7) & 7866 \\
  (8) & 7866\\
  \hline
\end{tabular}$$

These tuples were later processed by a classification algorithm.

\vspace{0.3cm}\hrule\vspace{0.3cm}
\centerline{{\bf Algorithm 4:} Isomorphism classification algorithm}
\vspace{0.3cm}\hrule\vspace{0.3cm}

\textbf{$\bullet $ Input:} A collection of lists of matrices representing semifields

\textbf{$\bullet$ Output:} Representatives of all isomorphism classes present in the collection

\textbf{$\bullet$ Procedure:}

$\hbox{ }$ $S \leftarrow \emptyset$

\textbf{$\hbox{ }$ for} each list $L$ of matrices in the collection \textbf{do}

 \textbf{\hspace{.7cm} if} $L \not\in S$ \textbf{then}

        \textbf{\hspace{1.3cm}} print $L$
        
      \textbf{\hspace{1.3cm}}  Insert in $S$ all cyclic representations of $L$
      
       \textbf{\hspace{.7cm} end}
     
     \textbf{$\hbox{ }$ end}
\vspace{0.3cm}\hrule\vspace{0.3cm}

The correctness of the algorithm is guaranteed by
\cite{Hentzel}[Theorem 2], since any primitive semifield is necessarily cyclic.
The algorithm produced 2826 isomorphism classes that were later processed by another algorithm which classified semifields under isotopy and $S_3$-action. The output of this third algorithm showed the existence of 27 isotopy classes and 12 different planes (up to $S_3$-equivalence). 
\vspace{0.3cm}\hrule\vspace{0.3cm}
\centerline{{\bf Algorithm 5:} Isotopy and $S_3$-action classification algorithm}
\vspace{0.3cm}\hrule\vspace{0.3cm}

\textbf{$\bullet $ Input:} A collection of lists of matrices representing semifields

\textbf{$\bullet$ Output:} Representatives of all isotopy and $S_3$-action classes present in the collection

\textbf{$\bullet$ Procedure:}

$\hbox{ }$ $S \leftarrow \emptyset$

\textbf{$\hbox{ }$ for} each list $L$ of matrices in the collection \textbf{do}

 \textbf{\hspace{.7cm} if} $L \not\in S$ \textbf{then}

        \textbf{\hspace{1.3cm}} print $L$
                
      \textbf{\hspace{1.3cm}}  
	$T \leftarrow \emptyset$
      
       \textbf{\hspace{1.3cm} if} classification is under $S_3$-action \textbf{then}

	      \textbf{\hspace{1.6cm} for} each $\sigma$ in $S_3$ \textbf{do}  

       	\textbf{\hspace{1.9cm}} Insert $\sigma(L)$ in $T$
	
	\textbf{\hspace{1.6cm} end} 

       \textbf{\hspace{1.3cm} else}
       
       \textbf{\hspace{1.6cm}} Insert $L$ in $T$
       
       \textbf{\hspace{1.3cm} end}
       
              \textbf{\hspace{1.3cm} for} each $I$ in $T$ \textbf{do}
              
              \textbf{\hspace{1.6cm} for} each principal isotope $J$ of $I$ \textbf{do}
              
              \hspace{1.9cm} Insert in $S$ all cyclic representations of $J$
              
               \textbf{\hspace{1.6cm} end}
               
                              \textbf{\hspace{1.3cm} end}
                              
                                             \textbf{\hspace{.7cm} end}
     
     \textbf{$\hbox{ }$ end}

\vspace{0.3cm}\hrule\vspace{0.3cm}

At this point we obtained a complete classification of the {known} semifield planes of order 81. We explicitely constructed all finite semifields of such an order that belong to any of the families collected in \cite{Kantor}. Classification under isomorphism, isotopy and $S_3$-action showed the existence of 7 {known} planes (up to $S_3$-action), that we list below. As before, a semifield representative is given for each plane, together with the order of its automorphism group (for a detailed description of the constructions, see \cite{Kantor}).

\vspace{0.3cm}\hrule\vspace{0.3cm}

\centerline{\textbf{I (Desarguesian plane)}}

Finite field $\hbox{GF}(81)$ (4 automorphisms) 
$$(A_2,A_3,A_4)=
(19792, 8866, 186745)$$

\centerline{\textbf{II (Twisted field plane)}}

Twisted Field (1 automorphism) with parameters:
\begin{itemize}
	\item $j\in \hbox{GF}(81)$ such that $j^4+2j+2=0$;
	\item $\alpha\in \hbox{Aut}(\hbox{GF}(81))$ such that $\alpha(x)=x^3$, for all $x\in \hbox{GF}(81)$;
	\item $\beta\in \hbox{Aut}(\hbox{GF}(81))$ such that $\beta(x)=x^{3^3}$, for all $x\in \hbox{GF}(81)$.
\end{itemize}
$$(A_2,A_3,A_4)=
(19792, 30332, 214473)$$

\centerline{\textbf{III (Commutative semifield)}}

Dickson's commutative semifield (4 automorphisms) with parameters:
\begin{itemize}
	\item $k\in \hbox{GF}(9)$ such that $k^2+2k+2=0$;
	\item $\sigma\in \hbox{Aut}(\hbox{GF}(9))$ such that $\sigma(x)=x^3$, for all $x\in \hbox{GF}(9)$.
\end{itemize}
$$(A_2,A_3,A_4)=
(19818, 9001, 355161)$$

\centerline{\textbf{IV}}

Knuth's semifield of type 1 (8 automorphisms) with parameters:
\begin{itemize}
	\item $k\in \hbox{GF}(9)$ such that $k^2+1=0$;
	\item $\alpha\in \hbox{Aut}(\hbox{GF}(9))$ such that $\alpha(x)=x^3$, for all $x\in \hbox{GF}(9)$.
	\item $\beta\in \hbox{Aut}(\hbox{GF}(9))$ such that $\beta(x)=x$, for all $x\in \hbox{GF}(9)$.
	\item $\gamma\in \hbox{Aut}(\hbox{GF}(9))$ such that $\gamma(x)=x^3$, for all $x\in \hbox{GF}(9)$.
\end{itemize}
$$(A_2,A_3,A_4)=
(19794, 428919, 473210)$$

\centerline{\textbf{V}}

Knuth's semifield of type 1 (4 automorphisms) with parameters:
\begin{itemize}
	\item $k\in \hbox{GF}(9)$ such that $k^2+2k+2=0$;
	\item $\alpha\in \hbox{Aut}(\hbox{GF}(9))$ such that $\alpha(x)=x^3$, for all $x\in \hbox{GF}(9)$.
	\item $\beta\in \hbox{Aut}(\hbox{GF}(9))$ such that $\beta(x)=x$, for all $x\in \hbox{GF}(9)$.
	\item $\gamma\in \hbox{Aut}(\hbox{GF}(9))$ such that $\gamma(x)=x^3$, for all $x\in \hbox{GF}(9)$.
\end{itemize}
$$(A_2,A_3,A_4)=
(19801, 191026, 186259)$$

\centerline{\textbf{VI}}

Knuth's semifield of type 2 (2 automorphisms) with parameters:
\begin{itemize}
	\item $f\in \hbox{GF}(9)$ such that $f+2=0$;
	\item $g\in \hbox{GF}(9)$ such that $g+2=0$;
	\item $\sigma\in \hbox{Aut}(\hbox{GF}(9))$ such that $\sigma(x)=x^3$, for all $x\in \hbox{GF}(9)$.
\end{itemize}
$$(A_2,A_3,A_4)=
(19794, 409289, 130416)$$

\centerline{\textbf{VII}}

Knuth's semifield of type 2 (1 automorphism) with parameters:
\begin{itemize}
	\item $f\in \hbox{GF}(9)$ such that $f^2+2f++2=0$;
	\item $g\in \hbox{GF}(9)$ such that $g+2=0$;
	\item $\sigma\in \hbox{Aut}(\hbox{GF}(9))$ such that $\sigma(x)=x^3$, for all $x\in \hbox{GF}(9)$.
\end{itemize}
$$(A_2,A_3,A_4)=
(19794, 519711, 29089)$$

\vspace{0.3cm}\hrule\vspace{0.3cm}

Besides these 7 {known} planes, our results showed the existence of 5 new planes:

\vspace{0.3cm}\hrule\vspace{0.3cm}

\centerline{\textbf{VIII}}

Semifield (4 automorphisms) with tuple of matrices
$$(A_2,A_3,A_4)=
(19825, 253482, 243782)$$

\centerline{\textbf{IX }}

Semifield (1 automorphism) with tuple of matrices 
$$(A_2,A_3,A_4)=
(19792, 8841, 198942)$$

\centerline{\textbf{X }}

Semifield (1 automorphism) with tuple of matrices 
$$(A_2,A_3,A_4)=
(19792, 8956, 202821)$$

\centerline{\textbf{XI }}

Semifield (1 automorphism) with tuple of matrices 
$$(A_2,A_3,A_4)=
(19792, 8956, 408532)$$

\centerline{\textbf{XII}}

Semifield (1 automorphism) with tuple of matrices 
$$(A_2,A_3,A_4)=
(19792, 8984, 461005)$$

\vspace{0.3cm}\hrule\vspace{0.3cm}

Matrices are encoded as numbers according to the following rule: the last three columns of
a matrix $A_i$, having a one in the $i$-th position of the first
column, and zeroes everywhere else

$$\left(%
\begin{array}{ccc}
   a_{11} & a_7 & a_3 \\
   a_{10} & a_6 & a_2 \\
   a_{9} & a_5 & a_1 \\
   a_{8} & a_4 & a_0 \\
\end{array}%
\right)$$
$\hbox{are given as the number} \sum_{i=0}^{1}a_i3^i.$
For each of the 12 semifield representatives, we computed the order of the autotopy group, the list of all principal isotopes, and the order of their isomorphism groups. All these data are collected in Table 1.

\begin{table}[htdp]
\begin{center}
\begin{tabular}{|c|c|c|c|}
  \hline
    \textbf{Plane number}  & $\mathbf{S_3-orbit}$ & \textbf{Order of At} & \textbf{S/A sum}\\
  \hline
  \textbf{I}  & \includegraphics[width=1cm, height=1cm]{hexagonCompGr.pdf} & 25600 & $\frac{1}{4}$\\

  \hline
   \textbf{II} & \includegraphics[width=1cm, height=1cm]{hexagonLinTr.pdf}  &  640 & $\frac{10}{1}$\\
   \hline
   \textbf{III}& \includegraphics[width=1cm, height=1cm]{hexagonLinGr.pdf}&  512 & $\frac{12}{1}+\frac{2}{4}$\\
   \hline
   \textbf{IV} & \includegraphics[width=1cm, height=1cm]{hexagonCompTr.pdf} &  2048 & $\frac{6}{2}+\frac{1}{8}$\\
      \hline
      \textbf{V} & \includegraphics[width=1cm, height=1cm]{hexagonCompTr.pdf} &  1024 & $\frac{6}{1}+\frac{1}{4}$\\
       \hline
       \textbf{VI} & \includegraphics[width=1cm, height=1cm]{hexagonCompTr.pdf} &  128 & $\frac{42}{1}+\frac{16}{2}$\\
         \hline
          \textbf{VII} & \includegraphics[width=1cm, height=1cm]{hexagonCompTr.pdf} &  64 & $\frac{100}{1}$\\
    \hline
   \textbf{VIII} & \includegraphics[width=1cm, height=1cm]{hexagonLinTrB.pdf}  &  256 & $\frac{24}{1}+\frac{1}{2}+\frac{2}{4}$\\
       \hline
   \textbf{IX}& \includegraphics[width=1cm, height=1cm]{hexagon.pdf} &  32 & $\frac{200}{1}$\\
      \hline
   \textbf{X} & \includegraphics[width=1cm, height=1cm]{hexagonLinTrB.pdf}   &  32 & $\frac{200}{1}$\\
      \hline
   \textbf{XI} & \includegraphics[width=1cm, height=1cm]{hexagonCompTr.pdf} &  16 & $\frac{400}{1}$\\
     \hline
   \textbf{XII} & \includegraphics[width=1cm, height=1cm]{hexagonLinTrB.pdf}   &  64 & $\frac{100}{1}$\\
          \hline
\end{tabular}
\caption{Classification of finite semifields of 81 elements (last five are new)}
\end{center}
\end{table}%

These results show the existence of 27 nonisomorphic planes. Two of them are commutative: the Desarguesian one (I) and the one coordinatized by Dickson's commutative semifield (III). The total amount of commutative semifields of order 81 is three: the finite field $GF(81)$, Dickson's commutative semifield, and one of its isotopes with matrices $(A_1,A_2,A_3,A_4)=(59293, 19818, 12291, 359225)$.
Table 2 contains a summary of our results.

\begin{table}[htdp]
\begin{center}
\begin{tabular}{|c|c|c|c|}
  \hline
   \textbf{Number of (commutative) classes}  & \textbf{Isomorphism} & \textbf{Isotopy} & \textbf{$S_3$-action}\\
  \hline
  Previously known &  245 (3) & 11 (2) & 7 (2)\\

  \hline
   Actual number & 2826 (3) & 27 (2) &12 (2)\\
   \hline
\end{tabular}
\caption{81-element finite semifields}
\end{center}
\end{table}%

\section{Concluding remarks}

In this paper we study finite semifields of order 81, with the help of computational tools.  We obtain a complete classification of finite semifields of such an order 81 and their planes (Tables 1 and 2). The total amount of these planes is 27. Two of them are commutative (coordinatized by three nonisomorphic commutative finite semifields), approximately half of them (16) were previously unknown.

\end{document}